\renewcommand{\epsilon}{\varepsilon}
\newcommand{\newsection}[1]
{\subsection{#1}\setcounter{theorem}{0} \setcounter{equation}{0}
\par\noindent}
\newtheorem{theorem}{Theorem}
\newtheorem{lemma}[theorem]{Lemma}
\newtheorem{corr}[theorem]{Corollary}
\newtheorem{proposition}[theorem]{Proposition}
\newtheorem{deff}[theorem]{Definition}
\newcommand{\bth}{\begin{theorem}}
\newcommand{\ble}{\begin{lemma}}
\newcommand{\bcor}{\begin{corr}}
\newcommand{\bdeff}{\begin{deff}}
\newcommand{\bprop}{\begin{proposition}}
\newcommand{\ele}{\end{lemma}}
\newcommand{\ecor}{\end{corr}}
\newcommand{\edeff}{\end{deff}}
\newcommand{\eprop}{\end{proposition}}
\newcommand{\supp}{\text{supp}}
\renewcommand{\Pi}{\varPi}
\renewcommand{\epsilon}{\varepsilon}
\newcommand{\tidle}{\tilde}
\newcommand{\K}{{\mathcal K}}
\newcommand{\R}{{\mathbb R}}
\newcommand{\la}{{\langle}}
\newcommand{\ra}{{\rangle}}
\newcommand{\cd}{{\,\cdot\,}}
\newcommand{\bdy}{{\partial\K}}
\newcommand{\ext}{{\R^4\backslash\K}}
\renewcommand{\S}{{\mathbb{S}}}
\renewcommand{\O}{{\mathcal{O}}}
\newcommand{\N}{{\mathbb{N}}}
\begin{document}
\bibliographystyle{plain}

\title[Global existence of quasilinear wave equations]
{
Global existence for systems of quasilinear wave equations
in $(1+4)$-dimensions
}

\author{Jason Metcalfe}
\author{Katrina Morgan}
\address{Department of Mathematics, University of North Carolina, Chapel Hill}
\email{metcalfe@email.unc.edu, katri@live.unc.edu}


\begin{abstract}
H\"ormander proved global existence of solutions for sufficiently
small initial data for scalar wave equations in $(1+4)-$dimensions of the form $\Box u =
Q(u, u', u'')$ where $Q$ vanishes to second order and $(\partial_u^2
Q)(0,0,0)=0$.  Without the latter condition, only almost global
existence may be guaranteed.  The first author and Sogge considered
the analog exterior to a star-shaped obstacle.  Both results relied on
writing the lowest order terms $u\partial_\alpha u = \frac{1}{2}\partial_\alpha u^2$ and as
such do not immediately generalize to systems.  The current study
remedies such and extends both results to the case of multiple speed systems. 
\end{abstract}

\maketitle


\newsection{Introduction}
In \cite{MS4}, the authors study a quasilinear wave equation in four
spatial dimensions whose nonlinearity $Q(u,\partial u, \partial^2 u)$
vanishes to second order and
is subject to the restriction
\begin{equation}
 \label{restriction}
(\partial_u^2 Q)(0,0,0)=0,
\end{equation}
which has the effect of disallowing a term of the form $u^2$.  The
global existence result of \cite{Hormander} is extended to equations
exterior to star-shaped obstacles with Dirichlet boundary conditions.
In \cite{MS4}, the first author inserted a careless
comment that the techniques carry over with trivial change to the case
of systems of equations.  This, unfortunately, is not the case as the
terms of the form $u\partial u$ were written as $\frac{1}{2} \partial
u^2$ in order to apply \cite[Proposition 4]{MS4}.  The current study
seeks to remedy this by providing techniques that do suffice to show
global existence for systems of equations of the same type.  The
original study \cite{Hormander} on $\R\times \R^4$ employed a similar
technique as \cite{MS4}, and as such, the extension to systems appears
to be new even in the $(1+4)$-dimensional boundaryless case.

Let us more specifically introduce the problem at hand.  We shall
examine systems of quasilinear equations of the form
\begin{equation}\label{generalEquation}
  \begin{cases}
    \Box_{c_I} u^I = Q^I(u,u',u''),\quad (t,x)\in \R_+\times\ext,\quad
    I=1,2,\dots,M,\\
    u^I(t,\cd)|_\bdy = 0,\\
    u^I(0,\cd)=f^I,\quad \partial_t u^I(0,\cd)=g^I
  \end{cases}
\end{equation}
where $\Box_{c_I}=\partial_t^2-c_I^2 \Delta$ is the d'Alembertian with
speed $c_I$.  We shall, without loss of generality, assume that
\begin{equation}\label{speeds}0<c_1\le c_2\le \dots \le c_M.\end{equation}
  Here $u=(u^1,\dots, u^M)$.  We use $u'$ and $\partial u$ to denote the space-time
gradient $(\partial_t u, \nabla_x u)$, but reserve $\nabla u =\nabla_x
u$ to denote the spatial gradient. 
The obstacle $\mathcal{K}$ is taken to be compact, to have smooth
boundary, and to be star-shaped (with respect to the origin).  By
translating and 
scaling, without loss of generality, we may take $0\in \K\subset \{|x|<1\}$.

The nonlinear
term $Q$ is smooth in its arguments, vanishes to second order, and
satisfies \eqref{restriction}.  Moreover,
it has the form
\begin{equation}\label{Q}
Q^I(u,u',u'')=A^I(u,u')+ 
B^{IJ,\alpha\beta}(u,u')\partial_\alpha\partial_\beta u^J.
\end{equation}
Here $A$ is
taken to vanish to second order at $(u,u')=(0,0)$, and $B$ vanishes to
first order at the origin.  We also assume the symmetry condition
\begin{equation}
 \label{symmetry}
B^{IJ,\alpha\beta}(u,u')=B^{JI,\alpha\beta}(u,u')=B^{IJ,\beta\alpha}(u,u'),\quad
1\le I, J\le M, \, 0\le \alpha,\beta\le 4.
\end{equation}
The above uses the convenient notation $\partial_0=\partial_t$.  Here
and throughout this
paper, we shall utilize the summation convention where repeated
indices are summed.  Greek indices $\alpha, \beta, \gamma$ are summed from $0$ to the spatial
dimension $4$, while lower case Latin indices $i, j, k$ are implicitly summed from $1$ to
$4$.  Repeated uppercase $I, J, K$ will be implicitly summed from $1$
to $M$.   We shall reserve $\mu$, $\nu$, and $\sigma$ to denote multiindices.

To solve \eqref{generalEquation}, one must assume that the data
satisfy some compatibility conditions.  These are well known, and we
shall only tersely describe them.  A more detailed exposition is
available in, e.g., \cite{KSS2}.  We write $J_ku=\{\partial^\mu_x
u\,:\, 0\le |\mu|\le k\}$.  For any formal $H^m$ solution $u$, we
can express $\partial_t^k u(0,\cd)=\psi_k(J_kf,J_{k-1}g)$, $0\le k\le m$
for some compatibility functions $\psi_k$.  The compatibility
condition of order $m$ for data $(f,g)\in H^m\times H^{m-1}$ simply
requires that $\psi_k$ vanishes on $\bdy$ for $0\le k\le m-1$.  For
$(f,g)\in C^\infty$, we say that the data satisfy the compatibility
condition to infinite order if the above holds for all
$m$. 

\begin{theorem}\label{main_thm}
Suppose $\K\subset \{x\in \R^4 : |x|<1\}$ is star-shaped with respect
to the origin and has smooth boundary.  Suppose $Q$ satisfies
\eqref{restriction}, \eqref{Q}, and \eqref{symmetry}.  Assume that
$(f,g)\in (C^\infty(\ext))^{2M}$ satisfy the compatibility conditions to
infinite order, have components that are supported in $(\ext)\cap \{|x|<R_0\}$, and 
\begin{equation}\label{smallness}
\sum_{|\mu|\le N} \|\nabla^\mu f\|_{L^2(\ext)} + \sum_{|\mu|\le N}
\|\nabla^\mu g\|_{L^2(\ext)} = \varepsilon.
\end{equation}
Then if $N\ge 12$ and $\varepsilon>0$ is sufficiently small, \eqref{generalEquation}
has a unique global solution.  
\end{theorem}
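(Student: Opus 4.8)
The plan is to run a global-in-time continuity (bootstrap) argument based on the method of commuting vector fields, adapted to the exterior domain as in \cite{MS4, Hormander}, together with the one new device needed to handle the lowest order terms $u^J\partial_\alpha u^K$, which for $M>1$ are no longer exact derivatives. We commute \eqref{generalEquation} with the fields $Z\in\{\partial_\alpha,\ \Omega_{jk}=x_j\partial_k-x_k\partial_j,\ L=t\partial_t+r\partial_r\}$: each rotation $\Omega_{jk}$ maps $\ext$ to itself, preserves the Dirichlet condition, and commutes with every $\Box_{c_I}$, while $[\Box_{c_I},L]=2\Box_{c_I}$, so $L$ may be commuted through each equation at the cost of a harmless constant, its failure to preserve the boundary condition being absorbed, as in the corresponding three-dimensional arguments, via elliptic regularity and local energy decay on the fixed compact set $\{|x|\le 2\}$. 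With $Z^\mu$ a product of $|\mu|$ such fields, set $\mathcal{E}_N(T)=\sup_{0\le t\le T}\big(\sum_{I}\sum_{|\mu|\le N}\|\partial Z^\mu u^I(t,\cd)\|_{L^2(\ext)}^2\big)^{1/2}$ together with the companion Keel--Smith--Sogge weighted space-time quantity, and keep the quasilinear part $B^{IJ,\alpha\beta}\partial_\alpha\partial_\beta u^J$ on the left as a perturbation $\Box_{c_I}-B^{IJ,\alpha\beta}\partial_\alpha\partial_\beta$ of the d'Alembertian. The symmetry \eqref{symmetry} makes the associated energy positive definite and lets the cross-speed terms be assembled into an exact time derivative modulo acceptable errors, and star-shapedness of $\K$ gives the correct sign to the boundary term in the multiplier identity.

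The analytic inputs are three by now standard families of estimates for the (perturbed) operators exterior to a star-shaped obstacle. First, the energy inequality
\[
\mathcal{E}_N(T)\ \lesssim\ \varepsilon\ +\ \sum_{I}\sum_{|\mu|\le N}\int_0^T\|Z^\mu Q^I(s,\cd)\|_{L^2(\ext)}\,ds\ +\ \int_0^T\|\partial B(s,\cd)\|_{L^\infty}\,\mathcal{E}_N(s)^2\,ds,
\]
the last term recording the perturbative errors from carrying $B\partial^2$ on the left. Second, the Keel--Smith--Sogge weighted space-time (local energy decay) estimate, which bounds $\|\langle x\rangle^{-1/2-\delta}\partial Z^{\le N}u\|_{L^2([0,T]\times\ext)}$ and the corresponding boundary trace by the same right-hand side, and which, via a weighted Hardy inequality, also controls $u$ itself and not merely $\partial u$ in a weighted $L^2([0,T]\times\ext)$ norm. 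Third, pointwise decay: away from $\K$ we cut off and invoke the Klainerman--Sobolev inequality on $\R^4$, $\langle t+|x|\rangle^{3/2}|Z^\mu u(t,x)|\lesssim\sum_{|\nu|\le|\mu|+3}\|Z^\nu u(t,\cd)\|_{L^2(\R^4)}$, with an additional $\langle c_I t-|x|\rangle^{1/2}$ gain in the deep interior supplied by $L$; near $\K$ we combine local energy decay with elliptic regularity and Sobolev embedding on $\{|x|\le 2\}$. Since $N\ge 12$, this leaves ample room: we obtain $\langle t\rangle^{-3/2}$ decay of $Z^\mu u$ and of $\partial Z^\mu u$ for $|\mu|$ comfortably below $N/2$, whence in particular $\|\partial B(s,\cd)\|_{L^\infty}\lesssim\langle s\rangle^{-3/2}\mathcal{E}_N(s)$, and $\int_0^\infty\langle s\rangle^{-3/2}\,ds<\infty$ in four dimensions.

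The remaining, and genuinely new, step is the estimate of the nonlinear right-hand side, and in particular of the contributions $u^J\partial_\alpha u^K$ (and, through the coefficient $B$, $u^J\partial_\alpha\partial_\beta u^K$) that occur inside $Z^\mu Q^I$. By \eqref{restriction} there is no pure $u^Ju^K$ term in $A^I$, so every term of $Q^I$ carries at least one space-time derivative; the only ones lying outside the ``gradient'' framework exploited in \cite{MS4, Hormander} are those containing an \emph{undifferentiated} factor $u^J$ with $J\neq K$, and we split
\[
u^J\partial_\alpha u^K\ =\ \tfrac12\,\partial_\alpha\!\left(u^Ju^K\right)\ +\ \tfrac12\left(u^J\partial_\alpha u^K-u^K\partial_\alpha u^J\right),
\]
the symmetric piece being an exact derivative to which the scalar argument of \cite{MS4, Hormander} applies verbatim, and the antisymmetric piece being what must be handled anew. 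The difficulty is that when $Z^\mu$ is distributed over the antisymmetric piece an undifferentiated factor $Z^{\mu_1}u^J$ may carry essentially all of the vector fields, and the only $L^2$ bound then available for it,
\[
\|Z^{\mu_1}u^J(t,\cd)\|_{L^2(\ext)}\ \lesssim\ \langle t\rangle\,\|\langle x\rangle^{-1}Z^{\mu_1}u^J(t,\cd)\|_{L^2(\ext)}\ \lesssim\ \langle t\rangle\,\mathcal{E}_N(t)
\]
(Hardy's inequality together with finite propagation speed, which confines the support to $|x|\lesssim\langle t\rangle$), loses a power of $\langle t\rangle$ that the bare $\langle t\rangle^{-3/2}$ decay of the remaining, differentiated, low-order factor does not recover in four dimensions. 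The recovery rests on the interplay of the two refinements above: the improved $\langle c_I t-|x|\rangle^{-1/2}$ weighted decay of the low-order factor, which is strongest precisely in the deep interior where the undifferentiated high-order factor is large, and the Keel--Smith--Sogge weighted space-time bounds for $u$ itself. After a dyadic decomposition of space-time into the regions $|x|\sim 2^j$, $|t-|x||\sim 2^k$ and a matching of weights in each, the offending space-time integral is shown to be summable and bounded acceptably for the continuity argument; carrying this out is, I expect, the main obstacle. The genuinely quasilinear terms $B\,\partial^2 u$ are then absorbed on the left exactly as in \cite{MS4} through a Gronwall argument in the energy.

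Assembling these estimates and feeding them into the energy and Keel--Smith--Sogge inequalities yields $\mathcal{E}_N(T)\lesssim\varepsilon+\mathcal{E}_N(T)^2$ plus higher-order terms, which for $\varepsilon$ sufficiently small improves the bootstrap hypothesis $\mathcal{E}_N(T)\le A\varepsilon$ with $A$ chosen appropriately; combined with the standard local existence theory for \eqref{generalEquation} under the stated compatibility conditions and the continuation criterion, this extends the solution to all $t>0$ and yields uniqueness. In brief, replacing the identity $u\,\partial u=\tfrac12\,\partial(u^2)$ used in \cite{MS4, Hormander} by a direct weighted estimate for the antisymmetric combinations $u^J\partial u^K-u^K\partial u^J$ is the entire new content, and it is precisely there that the four-dimensional decay rate and the local energy / weighted space-time estimates must be pushed to their sharp form.
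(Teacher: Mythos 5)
There is a genuine gap, on two fronts. First, your decay mechanism is not available in this setting. You invoke the Klainerman--Sobolev inequality in the form $\la t+|x|\ra^{3/2}|Z^\mu u(t,x)|\lesssim \sum_{|\nu|\le|\mu|+3}\|Z^\nu u(t,\cd)\|_{L^2}$ while commuting only with translations, rotations, and the scaling field $S$. That inequality requires the Lorentz boosts, which are excluded here precisely because they have unbounded coefficients on $\R_+\times\bdy$ (and interact badly with multiple speeds); with the restricted family one only gets decay in $|x|$, namely $\|w\|_{L^\infty_{2^j}}\lesssim 2^{-3j/2}\|\Gamma^{\le 3}w\|_{L^2_{2^j}}$ as in \eqref{weighted-Sob}, which gives nothing like $\la t\ra^{-3/2}$ near the obstacle or deep inside the light cone. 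Consequently your bounds $\|\partial B(s,\cd)\|_{L^\infty}\lesssim\la s\ra^{-3/2}\mathcal{E}_N(s)$ and the integrability $\int_0^\infty\la s\ra^{-3/2}\,ds<\infty$, on which the Gronwall absorption and the closing of the bootstrap rest, are unfounded. The paper's argument deliberately avoids any $t$-decay: time integrability is extracted from the weighted space-time (local energy) norms, e.g.\ \eqref{kss}, \eqref{lenod}, \eqref{weightedStrichartz}, \eqref{cpctF}, paired against the $|x|$-decay of \eqref{weighted-Sob}.

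Second, the genuinely new ingredient is not actually supplied. Your splitting of $u^J\partial_\alpha u^K$ into $\tfrac12\partial_\alpha(u^Ju^K)$ plus an antisymmetric remainder does not help: the antisymmetric combination has no divergence or null structure, and the real difficulty is the distribution of vector fields (an undifferentiated factor carrying $\sim N$ fields, or a differentiated factor carrying $\sim N$ fields, with the quasilinear terms threatening a loss of derivatives). You acknowledge that the dyadic estimate needed to control this term is ``the main obstacle,'' and your own accounting (a loss of $\la t\ra$ from Hardy against at best $\la t\ra^{-3/2}\la t-r\ra^{-1/2}$, even granting the unavailable decay) does not visibly close. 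The paper resolves this differently: it rewrites $\phi\,\partial(\Gamma^\mu\tilde u)=\partial(\phi\,\Gamma^\mu\tilde u)-(\partial\phi)\Gamma^\mu\tilde u$ and uses the divergence-form estimate \eqref{divFormLE} for the first piece, and, for the terms where the derivative sits on the high-order factor, it uses the lemma \eqref{mtt_lem} from \cite{MTT} (stated for the perturbed operator $\Box_h$, with $h$ built from $\phi$, to avoid exceeding the allowed regularity) to trade the derivative for a factor $R^{-1}$ on dyadic blocks well inside the light cone, while the region $|x|\gtrsim c_1 t$ is handled directly by the weights. Without either this derivative-for-decay exchange or a genuine substitute for it, and without replacing the boost-based decay by the $|x|$-decay/local-energy framework, your outline does not constitute a proof.
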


In the above theorem, we have taken the data to be compactly supported
for convenience and for clarity of exposition.  It is highly likely
that the same result holds for sufficiently small and regular data that decay
sufficiently fast at infinity.  Moreover, the assumed regularity is not
nearly sharp.  The methods employed will not approach the sharp
threshold.

Such quasilinear wave equations in $\R_+\times \R^4$  whose nonlinearity depends only
on derivatives of the solution were known to enjoy global existence
for sufficiently small data \cite{klainermanSob}.  Dependence on the
solution rather than its derivatives does not mesh as well with the
energy methods that are typically applied.  In this case, 
\cite{Hormander} shows that without the hypothesis \eqref{restriction}
almost global existence, which means that the lifespan grows
exponentially as the size of the data shrinks, is what is possible.
Moreover, \cite{Hormander} continues and demonstrates that
\eqref{restriction} suffices to establish small data global existence
for scalar equations.  To the authors' knowledge, however, the current
result for systems is new even in the boundaryless context.

Following the work \cite{KSS}, which first demonstrated the utility
of local energy estimates as a means to proving long time existence
for wave equations in exterior domains, a number of studies tackled the
existence of solutions to quasilinear wave equations with small initial data in
exterior domains when the nonlinearity depends only on derivatives of
the solution.  See., e.g., \cite{KSS3}, \cite{mns2, mns1}, \cite{MS3,
  MS2} for some of the most general results.  Only having the local
energy estimate for the flat wave equation, which fails to account for
the geometric perturbations introduced by the quasilinear
nonlinearities, requires additional
techniques in order to control the highest order energies.
  When, however, the
obstacle is assumed to be star-shaped, due to the existence of a local
energy estimate for sufficiently small time-dependent perturbations of
the wave equation, simpler techniques, which are more direct analogs
of \cite{KSS}, are available.  See \cite{MS2, ms_mathz}. 

Like in the boundaryless case, nonlinear dependence on the solution
rather than only its derivatives complicates the situation.  The works
\cite{DZ}, \cite{DMSZ}, and \cite{ZZ} explore the general case outside of
star-shaped obstacles and prove analogs
of the lifespans in the boundaryless case of \cite{Lindblad},
\cite{Hormander}, and \cite{LiZ}, \cite{LS} respectively.  See \cite{HM4d, HM3d} for
similar results in more general geometries.  The subsequent work
\cite{MS4} examines scalar equations subject to \eqref{restriction} exterior to $4$-dimensional
star-shaped obstacles as in the boundaryless case of \cite{Hormander}.

As mentioned previously, \cite{Hormander} and \cite{MS4} rely on
writing the terms of the form $u\partial u$ in divergence form
$\frac{1}{2}\partial (u^2)$ for which better estimates are available.
For systems of equations, however, it cannot be guaranteed that all
such first order terms are in divergence form.  The key new idea is to
use a lemma from \cite{MTT} that allows a derivative to be exchanged
for additional decay within the light cone.

We work outside of star-shaped obstacles so that the techniques of
\cite{ms_mathz} are available.  As in \cite{HM4d, HM3d}, it is
expected that this geometric condition could be relaxed
significantly.

We shall rely on a variant of Klainerman's method of invariant vector
fields \cite{Klainerman, klainermanSob} as was adapted to exterior domains in
\cite{KSS, KSS3}, \cite{MS3}, and subsequent works.  In particular,
to avoid having vector fields with unbounded normal components on $\bdy$, a restricted set
of vector fields is employed.  The generators of space-time
translations, spatial rotations, and scaling will be used.  We set
\[\Gamma= (\partial_0, \partial_1, \partial_2, \partial_3, \Omega_{12}, \Omega_{13}, \Omega_{14}, \Omega_{23},
  \Omega_{24}, \Omega_{34}, S)\]
with 
\[\Omega_{ij} = x_i \partial_j - x_j\partial_i,\quad S=t\partial_t +
  r\partial_r.\]
It is worth noting that $[\partial, \Gamma]u = \O(|\partial u|)$.
We will abbreviate multi-index notation and, for any $N\in \N$, denote
\[\Gamma^{\le N} u = \sum_{|\mu|\le N} \Gamma^{\mu} u,\quad
\partial^{\le N} u = \sum_{|\mu|\le N} \partial^\mu u.\]

We end this section by fixing additional notations.  We will denote
$\Box u = (\Box_{c_I} u^I)$.  And we will fix $\beta(\rho)$ to be a
smooth cutoff that is identically $1$ for $\rho \le 1$ and $0$ for
$\rho > 2$.  Furthermore, we let $\beta_R(\rho)= \beta(\rho/R)$.

We shall use $S_t = [0,t]\times \R^4$ and $S_t^\K = [0,t]\times
(\ext)$ to denote space-time strips in Minkowski space and the
exterior domain respectively.  We shall be working in mixed norms.
When two Lebesgue spaces appear, it is understood to be a space-time
norm:
\[\|u\|_{L^pL^q} = \Bigl(\int_0^t \Bigl(\int_\ext |u(s,x)|^q\,dx\Bigr)^{p/q}\,ds\Bigr)^{1/p}.\]
When three occurrences appear, this indicates that the norms are in
$t, r,$ and $\omega$ with the convention
\[\|u\|_{L^pL^qL^\theta} = \Bigl(\int_0^t \Bigl[\int_0^\infty \Bigl(\int_{\S^3}
  |u(s,r\omega)|^\theta\,d\sigma(\omega)\Bigr)^{q/\theta}\, r^3\,
  dr\Bigr]^{p/q}\,ds\Bigr)^{1/p}.\] 
Due to the appearance of cutoff functions in the sequel, we shall only
need this latter norm in the boundaryless case, which is what is
stated here.  We shall use $L^p_{2^k}$ to indicate that the norm is
restricted to an annulus (or a ball in the case of $k=0$): $c2^k < \la
\cd\ra < C2^k$.  The norm may be in the 
time or the spatial variables.  The positive constants $c, C$ will be 
permitted to change from line to line but may not depend on any
important parameters in the problem.

This article is organized as follows.  In the next section, the main
linear estimates are collected.  These are variants of energy and
local energy estimates.  The first ones are local energy estimates for
the solution without a derivative and are obtained by appropriately
dividing through by a derivative and using a variant of a Sobolev
embedding.  These are collected from \cite{DMSZ}, \cite{HMSSZ}, and
\cite{MS4}.  The second class of estimates apply to small,
time-dependent perturbations of $\Box$ and, as stated, are from
\cite{ms_mathz} but are heavily influenced by the preceding works
\cite{Sterbenz}, \cite{MS}.  The third section includes the main decay
estimate, which is from \cite{klainermanSob} and yields decay in $|x|$
at the cost of admissible vector fields.  Here we gather an estimate
of \cite{MTT} that allows us to exchange a derivative for additional
decay when sufficiently within the light cone.  The final section is
devoted to the proof of Theorem~\ref{main_thm}.


\newsection{Energy and local energy estimates}
In this section, the linear $L^2$ based estimates, which are
variants of energy and local energy bounds, are collected.  For the
first several estimates, as we will be cutting away from the obstacle,
it will suffice to consider bounds in Minkowski space with vanishing
initial data.  The basic uniform energy bound and local energy
estimate states that solutions to the scalar equation
\begin{equation}
  \label{bdyless}
  \begin{cases}
    \Box_c w = F,\quad (t,x)\in \R_+\times\R^4,\\
    w(0,\cd)= \partial_t w(0,\cd)=0.
  \end{cases}
\end{equation}
satisfy
\begin{equation}
  \label{LE}
  \|w'(t,\cd)\|_{L^2(\R^4)} + \|\la x\ra^{-1/2-} w'\|_{L^2L^2(S_t)}
  \lesssim \|F\|_{L^1L^2(S_t) + \la x\ra^{-1/2-} L^2L^2(S_t)}.
\end{equation}
Here and throughout, these estimates are global in nature, and the
implicit constants are independent of $t$.  
See, e.g., \cite{MST} for some history and more general results.

As the nonlinearity under consideration allows for dependence on the
solution rather than only its derivatives, variants of the above shall
be required where the solution is estimated rather than only its
derivatives.  To that end, we record the following previously
established results.  See, also, \cite{FW}, \cite{LMSTW}, and \cite{MW} for some closely related estimates.
\begin{proposition}[{\cite[Theorem 2.3]{DMSZ}, \cite[Lemma 3.1]{HMSSZ}}]
  \label{propWeightedStrichartz}
Let $w$ be a smooth solution to \eqref{bdyless} where $\supp\,
F(t,\cd)\subset \{|x|>1\}$ for all $t\ge 0$.  Then, for $t>0$,
\begin{equation}
  \label{lenod}
\|w(t,\cd)\|_{L^2(\R^4)} \\\lesssim \||x|^{-1}
F\|_{L^1 L^1 L^2(S_t)},
\end{equation}
and
for
$0<\gamma<\frac{1}{2}$, we have 
\begin{equation}
  \label{weightedStrichartz}
\||x|^{-\frac{1}{2}-\gamma} w\|_{L^2L^2(S_t)} \lesssim
\||x|^{-1-\gamma} F\|_{L^1 L^1 L^2(S_t)}.
\end{equation}
\end{proposition}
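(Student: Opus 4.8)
The plan is to prove the two bounds together: both control $w$ with no derivative on it, and the second is a one-parameter deformation of the first obtained by retaining the $L^2_t$ norm. The mechanism rests on three ingredients --- Duhamel's formula, to replace the $L^1$-in-time right-hand sides by a statement about the homogeneous evolution that is uniform in the elapsed time; a dyadic decomposition in $|x|$ together with the hyperbolic scaling, to reduce to unit spatial scale; and a reduction to a $(1+1)$-dimensional wave equation via $\tilde w := r^{3/2}w$, whose forward fundamental solution is a bounded kernel, so that an $L^1$ source is converted directly into $L^\infty$, hence $L^2$, control of $w$ itself. This last move is the ``divide through by a derivative and use a variant of a Sobolev embedding'' step referred to in the introduction.

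First I would write, by Duhamel, $w(t,\cd)=\int_0^t v_s(t,\cd)\,ds$, where $v_s$ solves $\Box_{c}v_s=0$ on $\{t>s\}$ with $v_s(s,\cd)=0$ and $\partial_t v_s(s,\cd)=F(s,\cd)$. Then $\|w(t,\cd)\|_{L^2(\R^4)}\le\int_0^t\|v_s(t,\cd)\|_{L^2(\R^4)}\,ds$, and since the right-hand side of \eqref{lenod} is an $L^1_s L^1_r L^2_\omega$ norm it suffices to prove $\|v_s(t,\cd)\|_{L^2(\R^4)}\lesssim\||x|^{-1}F(s,\cd)\|_{L^1_r L^2_\omega}$ \emph{uniformly in} $t-s\ge0$; Minkowski's inequality reduces \eqref{weightedStrichartz} to the analogous uniform weighted $L^2_t L^2_x$ bound for $v_s$. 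Next, using $\supp F(s,\cd)\subset\{|x|>1\}$, I would split $F(s,\cd)=\sum_{k\ge1}F_k(s,\cd)$ with $F_k$ supported where $|x|\sim 2^k$, apply the scaling $v(t,x)\mapsto v(2^k t,2^k x)$ to each piece to normalise its support to $|x|\sim1$, prove the estimate at unit scale, and sum. The weights $|x|^{-1}$, $|x|^{-1/2-\gamma}$, $|x|^{-1-\gamma}$ together with the measure $r^3\,dr$ built into the mixed norms are calibrated precisely so that the resulting series in $k$ is a convergent geometric series; making these powers balance is bookkeeping, but it is the point where the exact form of the estimates enters.

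For the unit-scale building block I would pass to one space dimension. When $v_s$ is spherically symmetric, $\tilde w:=r^{3/2}v_s$ solves the $(1+1)$-dimensional wave equation $(\partial_t^2-c^2\partial_r^2)\tilde w = -\tfrac{3c^2}{4}\,r^{-2}\tilde w$ on $\{r>0,\ t>s\}$ with $\tilde w(s,\cd)=0$ and $\partial_t\tilde w(s,\cd)=r^{3/2}F(s,\cd)$, subject to the Dirichlet condition $\tilde w|_{r=0}=0$ inherited from smoothness of $v_s$ at the origin. Since the forward fundamental solution of $\partial_t^2-c^2\partial_r^2$ on $\R^{1+1}$ is the bounded kernel $\tfrac1{2c}\mathbf1\{c|t-s|>|r-r'|\}$ and since the odd reflection of the data across $r=0$ has vanishing mean, the solution stays concentrated near $r=c(t-s)$ with sup $\lesssim\|r^{3/2}F(s,\cd)\|_{L^1_r}$; as $\|v_s(t,\cd)\|_{L^2(\R^4)}=\|\tilde w(t,\cd)\|_{L^2(dr\,d\omega)}$, this yields the claimed bound with the correct $|x|^{-1}$ weight, the $r^{-2}$ potential being harmless because finite propagation speed and $\supp F\subset\{|x|>1\}$ keep the relevant region away from $r=0$. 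For general $F$ one first commutes with the rotations $\Omega_{ij}$ (legitimate since $[\Box_c,\Omega_{ij}]=0$), runs the same argument with the spherical Laplacian carried along as a potential, and recovers the pointwise-in-$\omega$ loss via the Sobolev embedding $H^s(\S^3)\hookrightarrow L^\infty(\S^3)$, $s>3/2$ --- which is exactly why the right-hand sides are measured in $L^2_\omega$. The $|x|^{-1/2-\gamma}$ estimate follows along the same lines, now pairing the $(1+1)$-representation with the one-dimensional Morawetz weight $r^{-1/2-\gamma}$ (equivalently, writing the source as $\partial_r$ of a primitive and invoking \eqref{LE}), the loss of one spatial power being compensated exactly by the extra $|x|^{-1}$ on the right.

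The step I expect to be the main obstacle is the interaction of the angular variables with the origin. The clean $(1+1)$-dimensional picture is exact only in the radial case; in general the $r^{-2}$-weighted spherical Laplacian is not manifestly perturbative near $r=0$, so the hypothesis $\supp F\subset\{|x|>1\}$ must be used in an essential way to keep the argument away from the vertex, while one must simultaneously avoid any loss in the power of $|x|$ (or the dyadic sum fails to close) and any shrinking of the range $0<\gamma<\tfrac12$. Carrying this out with the stated sharpness is the technical content of \cite[Theorem 2.3]{DMSZ} and \cite[Lemma 3.1]{HMSSZ}, whose arguments I would follow.
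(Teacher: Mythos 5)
Your reduction to a $(1+1)$-dimensional problem is where the argument breaks. In $\R^4$ the substitution $\tilde w=r^{3/2}w$ produces the inverse-square term $\tfrac{3c^2}{4}r^{-2}\tilde w$, and this term is not perturbative: it scales exactly like the d'Alembertian and is precisely what encodes the failure of the strong Huygens principle in even dimensions (the clean ``bounded kernel, solution concentrated near $r=c(t-s)$'' picture is special to $n=3$, where $\tilde w=rw$ solves the free half-line wave equation). Your stated reason for discarding it is also backwards: finite propagation speed confines the influence of $F(s,\cd)$ to the set of points within distance $c(t-s)$ of its support, which for $t-s$ large \emph{contains} the origin, and since \eqref{lenod}--\eqref{weightedStrichartz} must hold with constants independent of $t$ you cannot restrict to times before the inward-moving wave reaches $r=0$. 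A second genuine gap is the non-radial case: commuting with $\Omega_{ij}$ and then using $H^s(\S^3)\hookrightarrow L^\infty(\S^3)$ spends angular derivatives of $F$, whereas the right-hand sides of \eqref{lenod} and \eqref{weightedStrichartz} carry only an $L^2_\omega$ norm and no vector fields; as written your argument proves a weaker estimate with $\Omega^{\le 2}F$ on the right, and repairing it requires spherical-harmonic decomposition with bounds uniform in the harmonic all the way down to $r=0$ --- exactly the part you defer to the references. (The Duhamel and dyadic-scaling reductions are fine, though note both estimates are exactly scale invariant, so the sum over scales closes by the $\ell^1$ structure of the $L^1_r$ norm, not by a geometric gain; also your vertex issue is not avoided by $\supp F\subset\{|x|>1\}$.)

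For comparison, the paper does not reprove the proposition; it quotes \cite{DMSZ} and \cite{HMSSZ} and sketches a different mechanism that sidesteps both of your obstacles. For \eqref{lenod} one ``divides through by a derivative'' (Duhamel with the propagator $\sin(c(t-s)|D|)/(c|D|)$) and applies a weighted Sobolev/trace-type bound in its dual form (the dual version of \cite[Lemma 2.2]{DMSZ}, akin to \cite[(3.20a)]{Sideris}), so no angular reduction is made and no angular regularity of $F$ is consumed. For \eqref{weightedStrichartz} one proves a variant of \eqref{LE} in which the power of the spatial weight is tied to the (fractional) regularity, as in \cite[(3.6)]{HMSSZ}, and combines it with a trace lemma; your alternative of ``writing the source as $\partial_r$ of a primitive and invoking \eqref{LE}'' is not an argument as it stands. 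So the proposal should be regarded as having genuine gaps rather than as an alternative proof: the steps you label as bookkeeping are fine, but the two steps that carry the analytic content would fail as stated.
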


The first estimate follows by dividing through by a derivative and
applying a Sobolev-type estimate that is akin to, e.g.,
\cite[(3.20a)]{Sideris}.  Specifically, see the dual version
\cite[Lemma 2.2]{DMSZ}.  The second estimate is established by proving
a variant of \eqref{LE} that relates the power of the weight to the
regularity (see \cite[(3.6)]{HMSSZ}) and combining such with a trace lemma.

The next result will allow us to easily handle the commutators that
will appear when we cut off away from the obstacle in order to apply
Proposition \ref{propWeightedStrichartz}.  This
is essentially \cite[(2.12)]{DMSZ}, \cite[Proposition 3]{MS4}.
\begin{proposition}
  Let $\tidle{w}$ be a smooth solution to \eqref{bdyless}, and suppose that $F(t,x)=0$ for $|x|>2R_0$.  Then,
  \begin{equation}
    \label{cpctF}
   \|\tilde{w}(t,\cd)\|_{L^2(\R^4)} + \|\la x\ra^{-1/2-} \tilde{w}\|_{L^2L^2(S_t)} \lesssim \|F\|_{L^2L^2(S_t)}.
  \end{equation}
\end{proposition}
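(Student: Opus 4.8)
The plan is to prove the two summands of \eqref{cpctF} by Fourier-analytic arguments in which the compact spatial support of $F$ is used in an essential way; a naive ``cut off and quote Proposition \ref{propWeightedStrichartz}'' will not suffice, since the right-hand sides there are measured in $L^1$ in time and in a weight that is singular at the origin, whereas \eqref{cpctF} must hold uniformly in $t$ with an $L^2L^2$ right-hand side. As the constant is to be $t$-independent, it is enough to take $t=\infty$: for finite $t$ one extends $F$ by zero to $\R_+$, which leaves $\tilde w|_{[0,t]}$ unchanged since the Duhamel formula is retarded.

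For the energy-type term $\|\tilde w(t,\cd)\|_{L^2(\R^4)}$ I would begin from Duhamel, $\tilde w(t)=\int_0^t\frac{\sin((t-s)c\sqrt{-\Delta})}{c\sqrt{-\Delta}}F(s)\,ds$, expand $\sin(\tau c\sqrt{-\Delta})=\frac{1}{2i}(e^{i\tau c\sqrt{-\Delta}}-e^{-i\tau c\sqrt{-\Delta}})$, factor the $t$-dependent half-wave groups out of the $s$-integral, and use that they are unitary on $L^2(\R^4)$ and commute with $(-\Delta)^{-1/2}$. This reduces matters to $\|\tilde w(t)\|_{L^2(\R^4)}^2\lesssim\sum_{\pm}\int_{\R^4}|\xi|^{-2}\,|\widehat F(\pm c|\xi|,\xi)|^2\,d\xi$, where $\widehat F$ is the spacetime Fourier transform of $F\,\mathbf 1_{[0,t]}$. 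Passing to polar coordinates turns $|\xi|^{-2}\,d\xi$ into $\rho\,d\rho\,d\sigma(\omega)$, and for each frozen time-frequency $\tau$ the function with spatial Fourier transform $\xi\mapsto\widehat F(\tau,\xi)$ is $\int_0^t e^{-is\tau}F(s,\cd)\,ds$, which lives in $\{|x|<2R_0\}$. The Agmon--H\"ormander/Sideris restriction estimate (the same ingredient that proves \eqref{lenod}; cf.\ \cite[Lemma 2.2]{DMSZ}, \cite[(3.20a)]{Sideris}) gives $\int_{\S^3}|\widehat F(\tau,\rho\omega)|^2\,d\sigma(\omega)\lesssim_{R_0}\rho^{-1}\bigl\|\int_0^t e^{-is\tau}F(s,\cd)\,ds\bigr\|_{L^2(\R^4)}^2$; the measure factor $\rho$ cancels the $\rho^{-1}$, and Plancherel in $\tau$ yields $\lesssim_{R_0,c}\|F\|_{L^2L^2(S_t)}^2$. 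Thus the compact support of $F$ supplies exactly the gain that came, in Proposition \ref{propWeightedStrichartz}, from the hypothesis $\supp F\subset\{|x|>1\}$.

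For the local-energy term $\|\la x\ra^{-1/2-}\tilde w\|_{L^2L^2(S_t)}$ I would instead take the Fourier transform in $t$ and use the retarded representation $\widehat{\tilde w}(\tau,\cd)=c^{-2}\bigl(-\Delta-(\tfrac{\tau}{c}+i0)^2\bigr)^{-1}\widehat F(\tau,\cd)$. Since each $\widehat F(\tau,\cd)$ is supported in $\{|x|<2R_0\}$, it suffices to have the uniform bound $\bigl\|\la x\ra^{-1/2-}(-\Delta-(\lambda+i0)^2)^{-1}\bigr\|_{L^2(\{|x|<2R_0\})\to L^2(\R^4)}\lesssim_{R_0}\la\lambda\ra^{-1}$, which follows for $|\lambda|\gtrsim1$ from the standard high-energy resolvent estimate and, for $|\lambda|\lesssim1$, from the fact that the Newtonian potential of an $L^2$ function supported in a fixed ball lies in $\la x\ra^{-1/2-}L^2(\R^4)$ with comparable norm (in $\R^4$ the kernel $|x-y|^{-2}$ is locally integrable, and the far field decays like $|x|^{-2}$). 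Plancherel in $\tau$ then converts this into $\|\la x\ra^{-1/2-}\tilde w\|_{L^2L^2(S_t)}^2\lesssim_{R_0,c}\|F\|_{L^2L^2(S_t)}^2$; alternatively this term may be produced by rerunning the weighted-energy-inequality-plus-trace-lemma proof of \eqref{weightedStrichartz} with the compact-support bound replacing the singular weight. The step I expect to be the main obstacle is precisely this low-frequency bookkeeping (equivalently, the zero-energy behaviour of the resolvent on $\R^4$): the naive alternatives --- the triangle inequality on the Duhamel integral, integrating $\tilde w'$ from \eqref{LE} in time, or a Hardy inequality passing from $\tilde w'$ to $\tilde w$ --- all cost a power of $t^{1/2}$, so one is driven to the resolvent/restriction machinery, in which the low frequencies are controlled only because $\supp F$ is a fixed compact set. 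The remaining steps (polar coordinates, Plancherel, H\"older on the ball) are routine.
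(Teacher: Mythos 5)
Your proposal is correct in outline, but it takes a genuinely different and much heavier route than the paper. The paper's proof stays entirely in physical space and is two lines long: apply \eqref{LE} to $w_j=\Delta^{-1}\partial_j\tilde w$, so that $\tilde w=\sum_j \partial_j w_j$ is recovered from the gradient terms on the left-hand side of \eqref{LE}, while the forcing $\Box_c w_j=\Delta^{-1}\partial_j F$ is placed in the dual slot $\la x\ra^{-1/2-}L^2L^2$ of \eqref{LE}: since the kernel of $\Delta^{-1}\partial_j$ is $\O(|x-y|^{-3})$ in $\R^4$ and $F(s,\cd)$ is supported in $\{|x|<2R_0\}$, Young's inequality gives $\|\la x\ra^{1/2+}\Delta^{-1}\partial_j F(s,\cd)\|_{L^2}\lesssim_{R_0}\|F(s,\cd)\|_{L^2}$. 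So your premise that the only alternatives to Fourier analysis (triangle inequality, integrating $\tilde w'$ in time, Hardy) must lose $t^{1/2}$ overlooks exactly this ``divide by a spatial derivative'' device, which is the same mechanism behind \eqref{lenod}. Your route --- Duhamel plus the Agmon--H\"ormander/Sideris trace bound for the energy piece, and the time Fourier transform plus uniform weighted resolvent (limiting absorption) bounds for the local energy piece --- does work: the trace estimate $\int_{\S^3}|\hat G(\rho\omega)|^2\,d\sigma\lesssim_{R_0}\rho^{-1}\|G\|_{L^2}^2$ for $G$ supported in $\{|x|<2R_0\}$ is true (indeed better than needed for $\rho\gtrsim 1$), the truncation-of-$F$ trick correctly gives $t$-uniformity, and only uniform boundedness of $\la x\ra^{-1/2-}(-\Delta-(\lambda+i0)^2)^{-1}\chi_{\{|x|<2R_0\}}$ is needed, not the $\la\lambda\ra^{-1}$ gain. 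The two points you leave as assertions are the identification of $\widehat{\tilde w}(\tau,\cd)$ with the outgoing resolvent applied to $\widehat F(\tau,\cd)$ (this needs a damping/limiting-absorption justification, since a priori $\tilde w$ is only polynomially bounded in time) and the uniformity of the low-frequency resolvent bound for $0<|\lambda|\lesssim 1$, which you verify only at $\lambda=0$; both are standard and true for the free Laplacian in $\R^4$, so your argument closes. The trade-off is clear: the paper's argument is short and uses only \eqref{LE}, while yours is independent of \eqref{LE} and gives sharper frequency-localized information, at the cost of importing restriction and resolvent machinery.
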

Here, as above, the constant is independent of $t$, but it may depend
on $R_0$.

In order to prove this result, \eqref{LE} is applied to
$w=\Delta^{-1}\partial_j \tilde{w}$.  Since the kernel of
$\Delta^{-1}\partial_j$ is $\O(|x-y|^{-3})$, Young's inequality
completes the proof.

A final such result shows that better bounds are available on the
solution when the nonlinearity is in divergence form.  The following
is based on ideas of \cite{Hormander}.
\begin{proposition}[{\cite[Proposition 4]{MS4}}]
Let $v$ be a smooth solution to
\begin{equation}
  \label{divFormeq}
  \begin{cases}
    \Box v = a_\alpha \partial_\alpha G,\quad (t,x)\in \R_+\times
    \R^4,\\
v(0,\cd)=\partial_t v(0,\cd)=0,
  \end{cases}
\end{equation}
where $a_\alpha$ are constants.  Moreover, assume that $G(0,\cd)\equiv
0$.  Then
\begin{equation}
  \label{divFormLE}
  \|\la x\ra^{-1/2-\delta} v\|_{L^2L^2(S_t)} +
  \|v(t,\cd)\|_{L^2(\R^4)} \lesssim \int_0^t \|G(s,\cd)\|_{L^2(\R^4)}\,ds.
\end{equation}
\end{proposition}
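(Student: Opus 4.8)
The plan is to remove the derivative from the inhomogeneity by passing to an auxiliary equation with source $G$ in place of $a_\alpha\partial_\alpha G$, and then to invoke \eqref{LE}. Let $u$ be the smooth solution of $\Box u = G$ on $\R_+\times\R^4$ with vanishing Cauchy data, constructed via Duhamel's formula. Since the $a_\alpha$ are constant and $\Box$ has constant coefficients, $a_\alpha\partial_\alpha$ commutes with $\Box$, so $w:=a_\alpha\partial_\alpha u$ solves $\Box w = a_\alpha\partial_\alpha G$. I would then check that $w$ has vanishing Cauchy data, so that $v = w$ by uniqueness: one has $w(0,\cd)=a_0\,\partial_t u(0,\cd)+a_i\,\partial_i u(0,\cd)=0$ because both $u(0,\cd)$ and $\partial_t u(0,\cd)$ vanish, while $\partial_t w(0,\cd)=a_i\,\partial_i\partial_t u(0,\cd)+a_0\,\partial_t^2 u(0,\cd)$; the first term vanishes because $\partial_t u(0,\cd)=0$, and the second vanishes because the equation gives $\partial_t^2 u(0,\cd)=c^2\Delta u(0,\cd)+G(0,\cd)=0$, using $u(0,\cd)=0$ together with the hypothesis $G(0,\cd)\equiv 0$.

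With $v=a_\alpha\partial_\alpha u$ identified, the estimate follows by applying \eqref{LE} to $u$ with $F=G$ and keeping only the $L^1L^2$ piece of the right-hand norm, giving $\|u'(t,\cd)\|_{L^2(\R^4)}+\|\la x\ra^{-1/2-}u'\|_{L^2L^2(S_t)}\lesssim \|G\|_{L^1L^2(S_t)}=\int_0^t\|G(s,\cd)\|_{L^2(\R^4)}\,ds$. Since $v$ is a fixed linear combination of the components of $u'=(\partial_t u,\nabla u)$, we have $|v|\lesssim|u'|$ pointwise, so the left-hand side of \eqref{divFormLE} is dominated by $\|u'(t,\cd)\|_{L^2(\R^4)}+\|\la x\ra^{-1/2-}u'\|_{L^2L^2(S_t)}$, where $\delta>0$ here plays the role of the suppressed positive exponent in the weight $\la x\ra^{-1/2-}$. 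This yields the claimed bound.

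The only step requiring genuine care is the computation of the Cauchy data of $a_\alpha\partial_\alpha u$, and specifically the identity $\partial_t^2 u(0,\cd)=c^2\Delta u(0,\cd)+G(0,\cd)$; this is the sole place the assumption $G(0,\cd)\equiv 0$ is used, and it cannot be dispensed with. Conceptually the mechanism is simply that shifting one derivative off the unknown and onto the inhomogeneity converts \eqref{LE}, an estimate on $u'$, into a bound on $v$ itself at no loss of regularity or decay — the observation exploited in \cite{Hormander}. Everything else (smoothness of $u$, the pointwise comparison $|v|\lesssim|u'|$, and the identification $\|G\|_{L^1L^2(S_t)}=\int_0^t\|G(s,\cd)\|_{L^2}\,ds$) is routine bookkeeping.
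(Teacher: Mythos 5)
Your argument is correct and is essentially the paper's own proof: the paper likewise introduces $v_1$ solving $\Box v_1 = G$ with vanishing data, observes $v = a_\alpha\partial_\alpha v_1$, and applies \eqref{LE}. Your careful verification of the Cauchy data of $a_\alpha\partial_\alpha u$ (the one place $G(0,\cd)\equiv 0$ is needed) just fills in the detail the paper leaves implicit.
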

Here if $\Box v_1=G$ with vanishing data, it is noted that $v =
 a_\alpha \partial_\alpha v_1$.  The result then follows from \eqref{LE}.

Due to the quasilinear nature of the study, we shall require variants
of \eqref{LE} that account for the geometry introduced by the
nonlinearity.  In particular, we need a version of \eqref{LE} that
holds for small, time-dependent perturbations of $\Box$.  To that end,
we define
\[(\Box_h u)^I = (\partial_t^2-c_I^2\Delta)u^I + 
  h^{IJ,\alpha\beta}(t,x)\partial_\alpha\partial_\beta u^J,\]
and we consider
\begin{equation}
  \label{pert}
\begin{cases}
  \Box_h u = F+G,\quad (t,x)\in \R_+\times \ext,\\
u|_{\bdy} = 0,\\
u(0,\cd)=f,\quad \partial_t u(0,\cd)=g.
\end{cases}
\end{equation}
We assume that the perturbation satisfies the symmetry conditions
\begin{equation}\label{sym}h^{IJ,\alpha\beta}=h^{JI,\alpha\beta}=h^{IJ,\beta\alpha}
\end{equation}
and the smallness condition
\begin{equation}\label{smallpert} |h| := \sum_{I,J=1}^M \sum_{\alpha,\beta = 0}^4
  |h^{IJ,\alpha\beta}|<\delta\ll 1.
\end{equation}
We also set the notation
\[|\partial h| = \sum_{I,J=1}^M \sum_{\alpha,\beta,\gamma=0}^4
  |\partial_\gamma h^{IJ,\alpha\beta}|.\]

\begin{proposition}[{\cite[Theorem 2.4]{ms_mathz}}]\label{kss_prop}
Suppose $\K$, as above, satisfies $0\in \mathcal{K}\subset \{x\in \R^4\,:\, |x|<1\}$,
is star-shaped with respect to the origin, and has a smooth boundary.
Assume that the $h^{IJ,\alpha\beta}$ satisfy \eqref{sym} and
\eqref{smallpert} for $\delta>0$ sufficiently small.  If $G(s,\cd)$ is
supported where $|x|<2$ for every $s\in [0,t]$, if $f, g$ vanish for $|x|>R_0$, and if
$u$ is a smooth solution to \eqref{pert} that vanishes for
large $|x|$ for each $s\in [0,t]$, then
\begin{multline}
  \label{kss}
  \|\la x\ra^{-1/2-}\Gamma^{\le N} u'\|_{L^2L^2(S^\K_t)} +
  \|\Gamma^{\le N} u'(t,\cd)\|_{L^2(\ext)}
\lesssim \|\nabla^{\le N} u'(0,\cd)\|_{L^2(\ext)} 
\\+
 \int_0^t\|\Gamma^{\le N} F(s,\cd)\|_{L^2(\ext)}\,ds
+\int_0^t \|[h^{IJ,\alpha\beta}(s,\cd)\partial_\alpha\partial_\beta,\Gamma^{\le
N}] u^J(s,\cd)\|_{L^2(\ext)}\,ds
\\+ \int_0^t\|\Gamma^{\le N-1} \Box
u(s,\cd)\|_{L^2(\ext)}ds
+ \int_0^t\Bigl\|\Bigl(|\partial
h(s,\cd)|+\frac{|h(s,\cd)|}{\la x\ra}\Bigr)|\Gamma^{\le N}\partial
u(s,\cd)|\Bigr\|_{L^2(\ext)}ds
\\+ \sum_{|\mu|+|\nu|\le N}\int_0^t 
\| |\Gamma^\nu h(s,\cd)| |\Gamma^\mu u'(s,\cd)|\|_{L^2((\ext)\cap\{|x|<1\})} \,ds
+ \|\Gamma^{\le N} G\|_{L^2L^2(S^\K_t)} \\+ \|\Gamma^{\le N-1} \Box
u\|_{L^2L^2(S^\K_t)} + \sup_{s\in [0,t]}\|\Gamma^{\le N-1} \Box u(s,\cd)\|_{L^2(\ext)}
\end{multline}
for any $N\ge 0$ and $t\ge 0$.
\end{proposition}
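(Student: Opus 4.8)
The plan is to assemble \eqref{kss} from three ingredients: a boundaryless local energy estimate for the perturbed operator $\Box_h$ commuted with $\Gamma$, a near-obstacle local energy estimate that exploits star-shapedness, and elliptic regularity near $\bdy$ to compensate for the fact that the fields in $\Gamma$ other than $\partial_t$ do not preserve the Dirichlet condition. Throughout I would use that $\K$ is compact and that $\Gamma$ contains only the translations $\partial$, the rotations $\Omega_{ij}$, and the scaling $S$ (no Lorentz boosts): these fields have at most linearly growing coefficients, hence bounded ones on compact sets, and each commutes with every $\Box_{c_I}$ up to a harmless term, namely $[\Box_{c_I},\partial]=[\Box_{c_I},\Omega_{ij}]=0$ and $[\Box_{c_I},S]=2\Box_{c_I}$, so the distinct speeds cause no difficulty for this linear estimate.

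\emph{Step 1 (boundaryless perturbed estimate with vector fields).} For $\Box_h w=F$ on $\R_+\times\R^4$ with vanishing data I would establish a perturbed analogue of \eqref{LE} for $\Box_h$ by the Morawetz--Rellich multiplier method (a radial multiplier $f(r)\partial_r$ plus a zeroth-order correction, with $f$ tending to a constant at spatial infinity): the extra terms generated by $h^{IJ,\alpha\beta}\partial_\alpha\partial_\beta w^J$ yield errors controlled by the $|\partial h|$- and $|h|/\la x\ra$-weighted quantities appearing in \eqref{kss}, plus, after an integration by parts, a contribution absorbed into the left side because $|h|<\delta\ll1$. Commuting $\Gamma$ through $\Box_h$ and iterating — keeping in mind $[\partial,\Gamma]u=\O(|\partial u|)$ — then produces the boundaryless version of \eqref{kss}: the commutator $[h^{IJ,\alpha\beta}\partial_\alpha\partial_\beta,\Gamma^{\le N}]w^J$ and, from the $[\Box_{c_I},S]=2\Box_{c_I}$ relations, the reduced $\Gamma^{\le N-1}\Box w$ terms (in their several norms) appear on the right, the latter expressed through $\Box w$ rather than $F$ so as to feed directly into the nonlinear iteration.

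\emph{Step 2 (near-obstacle estimate via star-shapedness).} On the exterior domain with $u|_\bdy=0$, the same multiplier argument produces in addition a boundary integral over $\bdy$ proportional to $\int_{\bdy}f(|x|)\,|\partial_\nu u|^2\,(x\cdot\nu)\,dS$; star-shapedness with respect to the origin gives $x\cdot\nu$ the favorable sign on $\bdy$, so this term may be discarded. Since $G$ is supported in $\{|x|<2\}$ and $f,g$ in $\{|x|<R_0\}$, one thereby controls the energy together with $\|\la x\ra^{-1/2-}u'\|_{L^2L^2}$ over a compact collar of $\K$ in terms of the data, of $\|\Gamma^{\le N}F\|_{L^1L^2}$ and $\|\Gamma^{\le N}G\|_{L^2L^2}$, and of the various perturbation terms already anticipated in \eqref{kss} (the $|\partial h|$- and $|h|/\la x\ra$-weighted terms, the $|\Gamma^\nu h|\,|\Gamma^\mu u'|$ term supported where $|x|<1$, and the reduced $\Gamma^{\le N-1}\Box u$ terms).

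\emph{Step 3 (vector fields near the boundary, and gluing — the crux).} The fields in $\Gamma$ other than $\partial_t$ do not preserve $u|_\bdy=0$, so one cannot commute them freely while keeping a vanishing boundary term. The resolution, as in the works leading to \cite{ms_mathz}, is to commute only $\partial_t$ (which does preserve the homogeneous condition) and to recover the missing spatial derivatives from elliptic regularity: since $c_I^2\Delta u^I=\partial_t^2 u^I-F^I-G^I+h^{IJ,\alpha\beta}\partial_\alpha\partial_\beta u^J$, the Dirichlet estimate for $\Delta$ on a compact collar of $\bdy$ trades each spatial derivative of $u$ for an additional time derivative of $u$, at the cost of corresponding derivatives of $F+G$, of $h\,\partial\partial u$, and of lower-order terms; since $\partial,\Omega,S$ have bounded coefficients on that collar, they may be interchanged with this step, producing only terms already on the right of \eqref{kss}. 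Finally I would glue with a cutoff $\chi$ equal to $1$ near $\K$ and to $0$ for $|x|>2$: Step 1 applies to $(1-\chi)u$, which solves $\Box_h((1-\chi)u)=(1-\chi)(F+G)+[\Box_h,\chi]u$ with the commutator supported in a compact annulus and hence dominated by the near-obstacle local energy norm of $\Gamma^{\le N}u'$ bounded in Step 2, while Steps 2--3 handle $\chi u$; taking $\delta$ small enough to absorb the residual $|h|$-weighted contributions into the left-hand side then closes \eqref{kss}. I expect Step 3 — reconciling the vector fields with the Dirichlet condition while simultaneously controlling the boundary term produced by the multiplier — to be the main obstacle.
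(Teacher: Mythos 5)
Your overall architecture — a Morawetz-type multiplier bound for $\Box_h$ in which star-shapedness gives the boundary term a favorable sign, commutation with $\partial_t$, elliptic regularity to recover spatial derivatives near $\bdy$, and a cutoff to reduce to the boundaryless setting away from $\K$ — is the same as the argument the paper sketches for this quoted result. However, Step 3 has a genuine gap at exactly the point you identify as the crux. The claim that ``$\partial,\Omega,S$ have bounded coefficients on that collar'' is false for the scaling field: on a compact spatial collar the coefficient of $t\partial_t$ in $S=t\partial_t+r\partial_r$ is $t$, and \eqref{kss} must hold with implicit constants independent of $t$. Consequently you cannot exchange $\Gamma^{\le N}u'$ for $\partial^{\le N}u'$ near the obstacle; each occurrence of $S$ would cost a factor of $t$. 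Only $\Omega_{ij}$ and the $r\partial_r$ part of $S$ are genuinely bounded there, so the elliptic-regularity step, as you have set it up, does not control the terms of the left side of \eqref{kss} containing $S$.

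The repair is the one the paper indicates: since $t\partial_t$ preserves the Dirichlet condition, keep it with the boundary-compatible piece and localize only the spatial part, writing $\Omega_{ij}=(1-\beta(|x|))\Omega_{ij}+\beta(|x|)\Omega_{ij}$ and $S=\bigl(t\partial_t+(1-\beta(|x|))r\partial_r\bigr)+\beta(|x|)r\partial_r$. The $\beta$-pieces have bounded coefficients and are handled by the estimate with only $\partial$'s (your $\partial_t$-commutation plus elliptic regularity), while the $(1-\beta)$-modified fields and $t\partial_t$ preserve $u|_{\bdy}=0$, so the $N=0$ exterior multiplier estimate applies to them directly, with the cutoff commutators (supported in a fixed compact set) placed into $G$ and then controlled by the derivative-only estimate. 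This also repairs your gluing, which as written is circular: after applying $\Gamma^{\le N}$, the commutator $[\Box_h,\chi]u$ involves $\Gamma^{\le N}\partial^{\le 1}u$ (including $\Gamma^{\le N}u$ with no derivative, which itself needs an integration off the boundary using $t\partial_t$) on a fixed annulus, and your Step 2 bounds only $u'$ — or, with Step 3, $\partial^{\le N}u'$ — there, not $\Gamma^{\le N}u'$; reducing the latter to the former is precisely the step that fails for $S$ without the decomposition above.
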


When $N=0$, \eqref{kss} is proved by pairing $\Box_h$ with
$\frac{r}{r+2^j}\partial_r u + \frac{3}{2}\frac{1}{r+2^j} u$ and
integrating by parts.  As $\partial_t$ perserves the boundary
conditions, the estimate when $\Gamma$ is replaced by $\partial_t$
follows immediately.  An elliptic regularity argument then yields the
estimate where the $\Gamma$ are all $\partial$.  Finally, the vector
fields can be broken into $\Omega_{ij} = (1-\beta(|x|))\Omega_{ij} +
\beta(|x|)\Omega_{ij}$ and $S=\Bigl(t\partial_t +
(1-\beta(|x|))r\partial_r\Bigr) + \beta(|x|)r\partial_r$.  The bounds
for any term involving $\beta(|x|)$ follow from the preceding step,
while the $(1-\beta(|x|))$ cutoffs guarantee preservation of the
boundary conditions.  Thus, the $N=0$ result may be applied and the
commutators are included in the $G$, which in turn is handled by the
estimate involving only derivatives as vector fields.

The result \eqref{kss} differs slightly in appearance from
\cite[Theorem 2.4]{ms_mathz}.  Here we have only applied the Schwarz
inequality and bootstrapped the portions coming from the multiplier.
As we are not using a multiple speed null condition, there is no loss
in the current setting in recording this simplied version.



\newsection{Sobolev-type decay bounds}
As was first noted in \cite{KSS}, local energy estimates allow us to
establish long time existence using decay in $|x|$ rather than decay
in $t$.  The latter, as in the Klainerman-Sobolev bounds
\cite{klainermanSob}, often relies on more symmetries of the equation.  This may
be manifest through the introduction of additional vector fields,
which may not mesh well with the existence of the boundary.

On the other hand, decay in $|x|$ can be readily established while only
relying on the generators of translations and spatial rotations, which
all have bounded coefficients on $\bdy$.  Indeed, we have the following.
\begin{lemma}[{\cite[Proposition 1]{klainermanSob}}]
  For $w\in C^\infty(\R^4)$ and $j\ge 0$,
  \begin{equation}
    \label{weighted-Sob}
    \|w\|_{L^\infty_{2^j}} \lesssim 2^{-3j/2}
    \|\Gamma^{\le 3} w\|_{L^2_{2^j}}.
  \end{equation}
\end{lemma}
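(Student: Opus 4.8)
The plan is to reduce the estimate on a fixed dyadic annulus to the standard Sobolev embedding $H^2(\S^3)\hookrightarrow L^\infty(\S^3)$ on the sphere together with a one-dimensional Sobolev embedding in the radial variable, with the vector field $S$ (or rather $r\partial_r$) and the rotations $\Omega_{ij}$ playing the roles of the radial and angular derivatives respectively. First I would introduce polar coordinates $x=r\omega$ with $\omega\in\S^3$ and rescale: fix $x_0$ with $|x_0|\approx 2^j$, and on the annulus $c2^j<|x|<C2^j$ set $y=x/2^j$, so we are reduced to a unit-scale annulus. The point of rescaling is that each spatial derivative $\partial_i$ picks up a factor $2^{-j}$, while the scaling field $S$ and the rotations $\Omega_{ij}$ are scale-invariant (homogeneous of degree zero), so they are exactly the derivatives that do not degrade under the rescaling — this is why the right-hand side of \eqref{weighted-Sob} is phrased in terms of $\Gamma$ rather than $\partial$.

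Next I would carry out the one-variable embedding. On the rescaled annulus the $\S^3$ directions are controlled by at most two of the $\Omega_{ij}$'s: the rotation fields span the tangent space to each sphere $\{|y|=\rho\}$, and $H^2(\S^3)\hookrightarrow L^\infty(\S^3)$ gives, for each fixed radius $\rho\approx 1$,
\begin{equation}
\|w\|_{L^\infty(\S^3)}^2 \lesssim \sum_{|\mu|\le 2}\int_{\S^3}|\Omega^\mu w(\rho\omega)|^2\,d\sigma(\omega).
\end{equation}
Then I would bound the supremum over $\rho$ of this radial function by a 1D Sobolev inequality, $\sup_\rho |G(\rho)| \lesssim \|G\|_{L^2(d\rho)}+\|\partial_\rho G\|_{L^2(d\rho)}$ applied on an interval of unit length, which costs one more derivative; since $\rho\approx 1$ on the rescaled annulus, $\partial_\rho$ and $\rho\partial_\rho$ are comparable, and $\rho\partial_\rho$ is (up to the rescaling-invariant identification) the field $r\partial_r = S - t\partial_t$. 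Combining, at most three copies of $\Gamma$ appear, and restoring the measure $r^3\,dr\,d\sigma \approx 2^{3j}\,d\rho\,d\sigma$ on the original annulus produces exactly the factor $2^{-3j/2}$ relating $\|w\|_{L^\infty_{2^j}}$ to $2^{-3j/2}\|\Gamma^{\le 3}w\|_{L^2_{2^j}}$.

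The one genuinely delicate point — which is the main obstacle — is the handling of the innermost ball $j=0$ and, relatedly, making sure the vector fields $S$ and $\Omega_{ij}$ actually control the ambient Euclidean derivatives on a set bounded away from the origin. Away from $r=0$ the collection $\{r\partial_r,\ \Omega_{ij}\}$ spans $\R^4$ at every point with coefficients bounded above and below, so one can pass freely between $\partial^\mu$ and $\Gamma^\mu$ there; but near $r=0$ the rotations degenerate and $r\partial_r$ vanishes. For $j=0$ the statement is about a ball, so instead I would simply invoke the plain Sobolev embedding $H^2(\R^4)\hookrightarrow L^\infty$, or rather note that on the unit ball $\|w\|_{L^\infty}\lesssim \|\partial^{\le 2}w\|_{L^2}$ and absorb the missing region $\{r<1/C\}$ using that $\partial^{\le 2}$ is dominated by $\Gamma^{\le 2}$ plus lower-order terms once one is at unit scale — here one uses $[\partial,\Gamma]=\O(|\partial\,\cdot\,|)$, noted in the introduction, to convert between the two families at a bounded loss. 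Since the claimed exponent at $j=0$ is just $2^0=1$, any clean $H^{\le 3}$ Sobolev bound on the unit ball suffices, and no sharpness is needed.
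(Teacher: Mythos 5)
Your proof is correct and is essentially the paper's argument: the paper simply localizes to the annulus, applies a Sobolev embedding on $\R\times\S^3$, and adjusts the volume element $r^3\,dr\,d\sigma\approx 2^{3j}\,dr\,d\sigma$, and your tensorized version --- $H^2(\S^3)\hookrightarrow L^\infty(\S^3)$ in the angular variables plus a one-derivative embedding in the radial variable, with the rotations $\Omega_{ij}$ and $\partial_r=\frac{x_i}{r}\partial_i$ (a bounded-coefficient combination of the translations in $\Gamma$, so there is no need to route through $S-t\partial_t$) supplying the three admissible fields --- is just that cylinder embedding written out, with the same Jacobian bookkeeping producing the factor $2^{-3j/2}$. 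The one slip is the claim $H^2(\R^4)\hookrightarrow L^\infty$ for the $j=0$ ball, which fails in the critical dimension $4$; but, as you yourself note, a three-derivative bound $\|w\|_{L^\infty(B_1)}\lesssim\|\partial^{\le 3}w\|_{L^2(B_2)}$ suffices there, and since the translations are themselves among the $\Gamma$, this is directly dominated by $\|\Gamma^{\le 3}w\|_{L^2_{2^0}}$ with no commutator conversion needed.
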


To prove this, after localizing, one applies Sobolev embeddings on
$\R\times \S^3$.  The result follows upon adjusting the volume element
to match that of $\R^4$ in polar coordinates.  Due to tails from the
cutoff functions, the annulus on the right must be larger than that on
the left, but our notation allows for such.

One of the key new ideas in this article when compared with \cite{MS4}
is the following lemma from \cite[Lemma 3.11]{MTT}, which allows us
to obtain further decay when the solution is differentiated.  It will
be convenient to have a version of this estimate that holds
for small, time-dependent perturbations of $\Box$, which is what we provide.

\begin{lemma}  Assume that \eqref{speeds}
  holds.  Moreover, assume that \eqref{sym} and \eqref{smallpert} hold
  for $\delta>0$ sufficiently small.  Then,
for any $1\ll R \le c_1 T/8$, we have
\begin{multline}
  \label{mtt_lem}
  \|\partial w\|_{L^2L^2([T,2T]\times \{|x|\in [R,2R]\})} \lesssim
  R^{-1} \|S^{\le 1} w\|_{L^2L^2([T/2,4T]\times \{|x|\in [R/2,4R]\})} \\+ \|
  |\partial h| w\|_{L^2L^2([T/2,4T]\times\{|x|\in [R/2,4R]\})}+ R
  \|\Box_h w\|_{L^2L^2([T/2,4T]\times \{|x|\in [R/2,4R]\})}.
\end{multline}
\end{lemma}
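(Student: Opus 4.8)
The plan is to prove \eqref{mtt_lem} as a localized ``interior'' energy estimate, exploiting that on the region $|x|\lesssim c_1 t$ the scaling field controls the time derivative through the pointwise identity
\[\partial_t w = \tfrac1t\bigl(Sw - r\,\partial_r w\bigr),\qquad\text{so}\qquad |\partial_t w|\le \tfrac1t|Sw| + \tfrac rt\,|\nabla w|,\]
and that, after the anisotropic rescaling $(t,x)\mapsto(t/T,x/R)$, $\Box_h$ is a small perturbation of the Laplacian, which is why the natural bound trades one derivative of $w$ for a factor $R^{-1}$ while charging $\Box_h w$ a factor $R$. It therefore suffices to estimate $\|\nabla w\|$ on $[T,2T]\times\{|x|\in[R,2R]\}$ by the right side of \eqref{mtt_lem}; the bound for $\|\partial_t w\|$, hence for $\|\partial w\|$, then follows from the displayed identity, where $r/t$ need only be bounded since $\|\nabla w\|$ will already have been controlled.

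First I would fix $\chi=\chi(t,x)\in\Coi$ equal to $1$ on $[T,2T]\times\{|x|\in[R,2R]\}$, supported in a box only slightly larger but still well inside $[T/2,4T]\times\{|x|\in[R/2,4R]\}$, with $|\nabla_x\chi|\lesssim R^{-1}$ and $|\partial_t\chi|\lesssim T^{-1}\lesssim R^{-1}$. Pairing $\Box_h w^I=\partial_t^2 w^I-c_I^2\Delta w^I+h^{IJ,\alpha\beta}\partial_\alpha\partial_\beta w^J$ against $\chi^2 w^I$, summing over $I$, and integrating by parts once in each term yields
\[c_I^2\int\chi^2|\nabla w^I|^2 = \int\chi^2|\partial_t w^I|^2 + \int\chi^2 w^I\,\Box_h w^I + \mathcal R,\]
where $\mathcal R$ collects all remaining terms: those involving a derivative of $\chi$ paired with an undifferentiated $w$, which by Cauchy--Schwarz and $|\nabla\chi|,|\partial_t\chi|\lesssim R^{-1}$ are bounded by $\varepsilon\int\chi^2|\partial w|^2+C_\varepsilon R^{-2}\|w\|^2$; the term $\int\chi^2 h^{IJ,\alpha\beta}\partial_\alpha w^I\partial_\beta w^J$, which is $\lesssim\delta\int\chi^2|\partial w|^2$ by \eqref{smallpert}; and the term $\int\chi^2 w^I(\partial_\alpha h^{IJ,\alpha\beta})\partial_\beta w^J$, which is $\lesssim\varepsilon\int\chi^2|\partial w|^2+C_\varepsilon\|\,|\partial h|\,w\|^2$ and is the source of the $\|\,|\partial h|\,w\|$ term in \eqref{mtt_lem}. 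Writing $\int\chi^2 w^I\,\Box_h w^I\le\tfrac{R^2}{2}\int\chi^2|\Box_h w|^2+\tfrac1{2R^2}\int\chi^2|w|^2$ produces the $R^2\|\Box_h w\|^2$ term, and every $\|w\|^2$-type remainder is absorbed into $R^{-2}\|S^{\le1}w\|^2$. It remains to handle $\int\chi^2|\partial_t w^I|^2$ and the leftover $\varepsilon\int\chi^2|\partial w|^2$, $\delta\int\chi^2|\partial w|^2$: using the pointwise identity together with one more Cauchy--Schwarz (with a small parameter $\eta$) gives
\[\int\chi^2|\partial_t w|^2\le(1+\eta)\Bigl(\sup_{\supp\chi}\tfrac{r^2}{t^2}\Bigr)\int\chi^2|\nabla w|^2 + C_\eta R^{-2}\int\chi^2|Sw|^2,\]
and here the hypothesis $R\le c_1 T/8$ enters: it forces $r^2/t^2$ to be comfortably below $c_1^2\le c_I^2$ throughout $\supp\chi$ (which is only slightly larger than $[T,2T]\times\{|x|\in[R,2R]\}$). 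Hence the coefficient of $\int\chi^2|\nabla w|^2$ accumulated on the right --- namely $(1+\eta)\sup_{\supp\chi}(r^2/t^2)$ plus contributions of size $O(\varepsilon)+O(\delta)$ --- is, for $\eta,\varepsilon,\delta$ small, strictly less than $c_I^2$, so it is absorbed into the left side.

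Since $\chi\equiv1$ on $[T,2T]\times\{|x|\in[R,2R]\}$, the resulting inequality reads
\[\|\nabla w\|_{L^2L^2([T,2T]\times\{|x|\in[R,2R]\})}^2 \lesssim R^{-2}\|S^{\le1}w\|^2 + R^2\|\Box_h w\|^2 + \|\,|\partial h|\,w\|^2,\]
with the three norms on the right over $[T/2,4T]\times\{|x|\in[R/2,4R]\}$; adding $\|\partial_t w\|^2\lesssim R^{-2}\|Sw\|^2+\|\nabla w\|^2$ from the pointwise identity (with $\|\nabla w\|$ now controlled) gives the same bound for $\|\partial w\|^2$, and taking square roots via $\sqrt{a+b+c}\le\sqrt a+\sqrt b+\sqrt c$ yields \eqref{mtt_lem}. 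I expect the only real obstacle to be organizational rather than conceptual: several remainder terms (those with $\nabla\chi$, with $\partial_t\chi$, with $h$, and the one produced when $\int\chi^2|\partial_t w|^2$ is rewritten) all feed back into $\int\chi^2|\partial w|^2$, so one must verify that their combined contribution to the coefficient of $\int\chi^2|\nabla w|^2$ remains below $c_I^2$, and that $\chi$ can be chosen with $\supp\chi\subset[T/2,4T]\times\{|x|\in[R/2,4R]\}$ while keeping $\sup_{\supp\chi}(r/t)$ small --- both of which the slack in $R\le c_1 T/8$ easily accommodates.
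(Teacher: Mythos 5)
Your proposal is correct and is essentially the paper's own argument: both pair $\Box_h w^I$ with (a weight/cutoff times) $w^I$, integrate by parts to exploit the structure of $c_I^2|\nabla w^I|^2-(\partial_t w^I)^2$, recover time derivatives from $\partial_t w=\tfrac1t(Sw-r\partial_r w)$ using $R\le c_1T/8$, bootstrap the $h^{IJ,\alpha\beta}\partial_\alpha w^I\partial_\beta w^J$ term for small $\delta$, and read off the $R^{-1}\|S^{\le1}w\|$, $\||\partial h|\,w\|$, and $R\|\Box_h w\|$ contributions. The only difference is organizational: the paper first applies a pointwise inequality with the exact weight $\tfrac{t}{c(ct-r)}$ and then integrates by parts on that weighted null quantity, whereas you use the plain $\chi^2 w$ multiplier and absorb $\int\chi^2|\partial_t w|^2$ at the integrated level via a small-constant Cauchy--Schwarz, which works because the cutoff can be chosen with $r/t\le c_1/2$ on its support.
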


\begin{proof}
  We begin with the pointwise estimate
\[|\nabla v|^2 \le \frac{1}{(ct-r)^2} (Sv)^2 + \frac{1}{c}\frac{t}{ct-r} [c^2|\nabla
v|^2 - (\partial_t v)^2],\quad r<c t.\]
We now fix $\chi\in C^\infty(\R\times \R)$ that is equal to $1$ on
$[1,2]\times [1,2]$ and supported in $[1/2,4]\times [1/2,4]$.
Applying the above estimate to each component of $w=(w^I)$, 
multiplying by $\chi_{T,R}(t,x) = \chi(t/c_1 T, |x|/R)$, and integrating
one obtains
\begin{multline*}
\int\int \chi_{T,R} |\nabla w|^2\,dx\,dt \lesssim \int\int
\frac{\chi_{T,R}}{T^2} |Sw|^2\,dx\,dt \\+ \sum_{I=1}^M\int \int
\chi_{T,R} \frac{t}{c_I(c_It-r)}[c_I^2 |\nabla w^I|^2 - (\partial_t w^I)^2]\,
dx\,dt.
\end{multline*}
As $\partial_t v = \frac{1}{t}Sv - \frac{r}{t}\partial_r v$, the above
generalizes to
\begin{multline*}
\int\int \chi_{T,R} |\partial w|^2\,dx\,dt \lesssim \int\int
\frac{\chi_{T,R}}{T^2} |Sw|^2\,dx\,dt \\+ \sum_{I=1}^M\int \int
\chi_{T,R} \frac{t}{c_I(c_It-r)} [c_I^2 |\nabla w^I|^2 - (\partial_t w^I)^2]\,
dx\,dt.
\end{multline*}
Integrating by parts, however, yields
\begin{multline*}\sum_{I=1}^M\int\int \chi_{T,R} \frac{t}{c_I(c_It-r)} 
[c_I^2 |\nabla w^I|^2 - (\partial_t w)^2] dx dt \\=
\sum_{I=1}^M\int\int \Box_h w^I \chi_{T,R} \frac{t}{c_I(c_It-r)} w^Idx
dt - \frac{1}{2}\sum_{I=1}^M\int\int \Box_{c_I}\Bigl( 
\chi_{T,R}\frac{t}{c_I(c_It-r)}\Bigr) (w^I)^2 dx dt
\\+\int \int \chi_{T,R}\frac{t}{c_I(c_It-r)} \Bigl[\partial_\alpha
h^{IJ,\alpha\beta}\partial_\beta w^J w^I
+ h^{IJ,\alpha\beta}\partial_\beta
w^J\partial_\alpha w^I\Bigr] dx dx\\+ \int \int \partial_\alpha\Bigl(
\chi_{T,R}\frac{t}{c_I(c_It-r)}\Bigr)  h^{IJ,\alpha\beta}\partial_\beta w^J
w^I dx dt.
\end{multline*}
The lemma follows from the two preceding equations as long as $\delta$
is sufficiently small so that the $h^{IJ,\alpha\beta} \partial_\beta
w^J \partial_\alpha w^I$ term may be bootstrapped.
\end{proof}

We end this section with a quick version of a Hardy inequality.  While
this is rather standard (see, e.g., \cite{Alinhac}), we provide a proof to illustrate that one may
rely only on the star-shapedness and not require the vanishing of the
solution on the boundary.  

\begin{lemma}\label{hardyLemma}
  Let $\K$ be a star-shaped with respect to the origin, and suppose
  $v\in C^1(\ext)$ vanishes at infinity. Then 
\[\|v/r\|_{L^2(\ext)}\lesssim
  \|\nabla v\|_{L^2(\ext)}.\]
\end{lemma}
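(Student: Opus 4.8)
The plan is to prove the Hardy inequality by a standard integration-by-parts argument adapted to the exterior domain, using star-shapedness to control the boundary term. First I would write $|v(x)|^2/r^2$ in a form amenable to integration by parts in the radial direction. The natural identity is
\[
\frac{1}{r^2} = -\frac{1}{n-2}\,\partial_r\!\left(\frac{1}{r^{n-1}}\right)\cdot r^{n-1}\cdot\frac{1}{r^{n-2}}\ \text{(schematically)},
\]
but more cleanly, in $\R^4$ one uses $\operatorname{div}(x/r^2) = 2/r^2$, so that
\[
\int_\ext \frac{|v|^2}{r^2}\,dx = \frac{1}{2}\int_\ext |v|^2 \operatorname{div}\!\left(\frac{x}{r^2}\right)dx.
\]
Integrating by parts then produces $-\frac{1}{2}\int_\ext \frac{x}{r^2}\cdot\nabla(|v|^2)\,dx = -\int_\ext \frac{x}{r^2}\cdot(v\,\nabla v)\,dx$ plus a boundary term on $\bdy$.

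The boundary term is $\frac{1}{2}\int_{\bdy} |v|^2 \frac{x\cdot n}{r^2}\,d\sigma$, where $n$ is the outward unit normal to $\ext$ (i.e.\ pointing \emph{into} $\K$). Here is where star-shapedness enters: since $\K$ is star-shaped with respect to the origin, $x\cdot \nu \ge 0$ on $\bdy$ where $\nu$ is the outward normal to $\K$, hence $x\cdot n = -x\cdot\nu \le 0$, so the boundary term has a favorable sign and can simply be discarded. (There is no decay issue at infinity since $v$ vanishes there and $n\ge 4 > 2$ makes the weight integrable; one should localize with a cutoff $\beta_R$ and let $R\to\infty$ to be rigorous, but this is routine.) This leaves
\[
\int_\ext \frac{|v|^2}{r^2}\,dx \le -\int_\ext \frac{x}{r^2}\cdot v\,\nabla v\,dx \le \int_\ext \frac{|v|}{r}\,|\nabla v|\,dx,
\]
using $|x|/r^2 = 1/r$. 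Applying Cauchy--Schwarz to the right side gives $\|v/r\|_{L^2(\ext)}^2 \le \|v/r\|_{L^2(\ext)}\|\nabla v\|_{L^2(\ext)}$, and dividing through (after first checking $\|v/r\|_{L^2}<\infty$, again via the cutoff approximation) yields the claimed bound with implicit constant $1$.

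The main obstacle, such as it is, is purely bookkeeping: one must justify the integration by parts on the unbounded domain $\ext$ with a possibly merely $C^1$ function, which requires approximating by $\beta_R v$ and verifying that the extra commutator term $\int \frac{x}{r^2}\cdot\nabla(\beta_R)\,|v|^2$ vanishes as $R\to\infty$ (it does, since $\nabla\beta_R = O(1/R)$ is supported in $\{R<|x|<2R\}$ and contributes $O(R^{-1})\int_{R<|x|<2R}|v|^2/r\,dx \to 0$ provided $v/r\in L^2$, which one establishes first on a truncated domain with a constant independent of the truncation). No smoothness or vanishing of $v$ on $\bdy$ is needed—only the sign of $x\cdot n$—which is precisely the point the lemma is making.
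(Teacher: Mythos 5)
Your proof is correct and follows essentially the same route as the paper: integrate the identity $\operatorname{div}(x\,v^2/r^2)=2v^2/r^2+2(v/r)\partial_r v$ over $\ext$, discard the boundary term using the sign of $\langle x,n\rangle$ from star-shapedness, and finish with Cauchy--Schwarz. The additional cutoff/approximation remarks are fine but not substantively different from the paper's argument.
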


\begin{proof}
  Integrating the identity
\[\partial_i \Bigl(\frac{x_i}{r^2} v^2\Bigr) = 2 \frac{v^2}{r^2} +
  2\frac{v}{r} \partial_r v\]
over $\ext$, we obtain
\[0\ge - \int_\bdy \la x, n\ra \frac{v^2}{r^2}\,d\sigma = 2
  \|v/r\|^2_{L^2(\ext)} + 2 \int \frac{v}{r} \partial_r v\,dx.\]
Here $n$ is the outward unit normal to $\K$.  An application of the
Schwarz inequality completes the proof.
\end{proof}

\newsection{Proof of Theorem \ref{main_thm}}
Since this is a small data result, higher order nonlinear terms are
better behaved.  Thus, for simplicity of exposition, we shall truncate
$Q$ to second order, which will allow us to assume 
\begin{equation}
  \label{Qtrunc}
  Q^I(u,u',u'') = a^\alpha_{IJK} u^J \partial_\alpha u^K +
  b^{\alpha\beta}_{IJK} \partial_\alpha u^J \partial_\beta u^K +
  A^{\alpha\beta}_{IJK} u^J \partial_\alpha\partial_\beta u^K +
  B^{\alpha\beta\gamma}_{IJK} \partial_\alpha
  u^J \partial_\beta\partial_\gamma u^K.
\end{equation}
The
constants $A^{\alpha\beta}_{IJK}, B^{\alpha\beta\gamma}_{IJK}$ are
subject to
\[A^{\alpha\beta}_{IJK} = A^{\beta\alpha}_{IJK}=A^{\alpha\beta}_{KJI},
  \quad B^{\alpha\beta\gamma}_{IJK} = B^{\alpha\gamma\beta}_{IJK} = B^{\alpha\beta\gamma}_{KJI}.\]
No $u^2$ term appears above
due to \eqref{restriction}. 

\subsubsection{Preliminaries}  Here we will prove a couple of lemmas
that will be useful for closing our iteration.  Recall that we have fixed $R_0>1$ so that $\K$ and the supports of the
data are contained in $\{|x|<R_0\}$.  Consider solutions $u$
to
\begin{equation}
  \label{genEq}
  \begin{cases}
    \Box_{c_I} u^I = a^\alpha_{IJK} \Bigl[\phi^J \partial_\alpha \tilde{u}^K +
\tilde{u}^J \partial_\alpha \psi_1^K\Bigr]
+b^{\alpha\beta}_{IJK}\Bigl[\partial_\alpha
\phi^J \partial_\beta\tilde{u}^K + \partial_\alpha
\tilde{u}^J \partial_\beta \psi_1^K\Bigr]
\\\qquad\qquad+
A^{\alpha\beta}_{IJK}\Bigl[\phi^J\partial_\alpha\partial_\beta u^K
+ \tilde{u}^J \partial_\alpha\partial_\beta \psi_2^K\Bigr]
+B^{\alpha\beta\gamma}_{IJK}\Bigl[\partial_\alpha\phi^J\partial_\beta\partial_\gamma
u^K + \partial_\alpha\tilde{u}^J \partial_\beta\partial_\gamma
\psi_2^K\Bigr],\\
u^I(t,\cd)|_{\bdy}=0,\\
u(0,\cd)=f,\quad \partial_t u(0,\cd) = g, \quad \supp (f,g)\subseteq
\{|x|\le R_0\}\cap (\ext).
  \end{cases}
\end{equation}
Here, $I=1,2,\dots, M$, $(t,x)\in \R_+\times\ext$, and $\phi, \psi_j,
\tilde{u} \in (C^\infty(\R_+\times \ext))^M$.  The initial data are moreover
assumed to be smooth and to satisfy the compatibility conditions to
infinite order. 

We shall set
\begin{multline}
  \label{MN}
  M_N[u](T) =   \sup_{t\in [0,T]} \Bigl[(1+t)^{-\delta} \|\Gamma^{\le N}
  u(t,\cd)\|_{L^2(\ext)} + \|\Gamma^{\le N}
  u'(t,\cd)\|_{L^2(\ext)}\Bigr] \\
+ \|\la x\ra^{-3/4} \Gamma^{\le N} u\|_{L^2L^2(S^\K_T)}+ \|\la x\ra^{-3/4}
  \Gamma^{\le N} u'\|_{L^2L^2(S^\K_T)}
\end{multline}
for some $\delta\ge 0$ sufficiently small.

 The first lemma controls the local energy portions of $M_N[u]$.
\begin{lemma}\label{lemmaMNle}
  For $u\in (C^\infty(\R_+\times \ext))^M$ solving \eqref{genEq} and any $N$, we have
\begin{multline}\label{MNstep1}\sup_{t\in [0,T]} \|\Gamma^{\le N}u'(t,\cd)\|_{L^2(\ext)}
+\|\la x\ra^{-3/4} \Gamma^{\le N} u'\|_{L^2L^2(S^\K_T)}
\\+\sup_{t\in [0,T]} \|\beta_{2R_0}(|x|) \Gamma^{\le N} u(t,\cd)\|_{L^2(\ext)}+
  \|\beta_{2R_0}(|x|) \Gamma^{\le N} u\|_{L^2L^2(S^\K_T)}  
\\\lesssim \|\nabla^{\le N} u'(0,\cd)\|_{L^2}+
M_{N/2+4}[\phi]\Bigl(M_N[\tilde{u}]+M_N[u]\Bigr) \\+ \Bigl(M_{N/2+3}[\tilde{u}]+M_{N/2+4}[u]\Bigr)
  M_N[\phi]
+ \Bigl(M_{N/2+3}[\psi_1] + M_{N/2+4}[\psi_2]\Bigr)M_N[\tilde{u}] 
\\+ M_{N/2+3}[\tilde{u}] \Bigl(M_N[\psi_1] + M_{N+1}[\psi_2]\Bigr)
\end{multline}
provided $|\partial^{\le 1} \phi| < \tilde{\delta}$ for some
$\tilde{\delta}>0$ sufficiently small.
\end{lemma}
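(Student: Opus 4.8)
The plan is to apply the perturbed local energy estimate of Proposition~\ref{kss_prop} to $u$ solving \eqref{genEq}, treating the $\phi$-multiplied second-order terms as the small time-dependent perturbation $h$ and everything else as the forcing $F+G$. Concretely, I would set $h^{IJ,\alpha\beta} = A^{\alpha\beta}_{IJK}\phi^K + B^{\alpha\beta\gamma}_{IJK}\partial_\gamma\phi^K$ (relabeled so the symmetry \eqref{sym} matches \eqref{symmetry} and the constraints on $A,B$), so that the hypothesis \eqref{smallpert} follows from $|\partial^{\le 1}\phi|<\tilde\delta$ with $\tilde\delta$ chosen $\le\delta$. All the remaining terms on the right of \eqref{genEq} — namely $a^\alpha_{IJK}[\phi^J\partial_\alpha\tilde u^K + \tilde u^J\partial_\alpha\psi_1^K]$, $b^{\alpha\beta}_{IJK}[\partial_\alpha\phi^J\partial_\beta\tilde u^K + \partial_\alpha\tilde u^J\partial_\beta\psi_1^K]$, the cross terms $A^{\alpha\beta}_{IJK}\tilde u^J\partial_\alpha\partial_\beta\psi_2^K$ and $B^{\alpha\beta\gamma}_{IJK}\partial_\alpha\tilde u^J\partial_\beta\partial_\gamma\psi_2^K$ — get split via the cutoff $\beta_{2R_0}$ into a piece supported in $\{|x|<4R_0\}$ (which is placed in $G$, since $G$ is only required to be supported near the obstacle, after noting $4R_0$ is a fixed constant and rescaling the cutoff appropriately) and a far piece (placed in $F$, estimated in $L^1L^2$). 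Then \eqref{kss} bounds the left side of \eqref{MNstep1} — the $\Gamma^{\le N}u'$ energy and local energy, plus the cutoff $\beta_{2R_0}\Gamma^{\le N}u$ pieces — by the data term $\|\nabla^{\le N}u'(0,\cd)\|_{L^2}$ plus a sum of integrals involving $\Gamma^{\le N}F$, $\Gamma^{\le N}G$, the commutator $[h^{IJ,\alpha\beta}\partial_\alpha\partial_\beta,\Gamma^{\le N}]u^J$, the $|\partial h|+|h|/\la x\ra$ term against $\Gamma^{\le N}\partial u$, and the near-obstacle $|\Gamma^\nu h||\Gamma^\mu u'|$ terms, together with the $\Gamma^{\le N-1}\Box u$ terms. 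Here I should remark that $\Box u$ on the right of \eqref{kss} means the genuine flat d'Alembertian applied to the solution, i.e. it equals the full right-hand side of \eqref{genEq}; since it appears only at order $N-1$, every factor there carries at most $N-1$ derivatives, which is what makes the bilinear splitting below leave room.

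The core of the argument is then bilinear estimation: every product on the right must be bounded by the displayed combination of $M$-quantities. The mechanism is the standard one — in any product of two factors, at least one carries $\le N/2$ vector fields, so by the weighted Sobolev Lemma (inequality \eqref{weighted-Sob}) that factor is pointwise bounded by $\la x\ra^{-3/2}M_{\lfloor N/2\rfloor+3}[\cdot]$ (or one more derivative if a $\partial$ is present, giving $M_{N/2+4}$), while the other factor keeps up to $N$ vector fields and is absorbed into a weighted $L^2L^2(S^\K_T)$ norm, i.e. into $M_N[\cdot]$, using that $\la x\ra^{-3/2}\cdot\la x\ra^{3/4}=\la x\ra^{-3/4}$ and $\la x\ra^{-3/2}$ is integrable enough in the far region; in the near region $\{|x|<1\}$ the weights are harmless and one just uses the local-energy/energy pieces of $M_N$ directly. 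The pattern of indices then dictates which $M$ goes with which function: a term with $\phi$ and a solution factor ($\tilde u$ or $u$) produces either $M_{N/2+4}[\phi](M_N[\tilde u]+M_N[u])$ or $(M_{N/2+3}[\tilde u]+M_{N/2+4}[u])M_N[\phi]$ depending on which factor is taken low; a term with $\tilde u$ and a $\psi_j$ produces $(M_{N/2+3}[\psi_1]+M_{N/2+4}[\psi_2])M_N[\tilde u]$ or $M_{N/2+3}[\tilde u](M_N[\psi_1]+M_{N+1}[\psi_2])$ — the extra $M_{N+1}[\psi_2]$ arising because $\psi_2$ enters \eqref{genEq} with two derivatives, so when $\psi_2$ is the high factor one needs $\Gamma^{\le N}\partial^2\psi_2$, controlled by $M_{N+1}[\psi_2]$. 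The commutator term $[h^{IJ,\alpha\beta}\partial_\alpha\partial_\beta,\Gamma^{\le N}]u^J$ expands, using $[\partial,\Gamma]=\O(|\partial|)$, into a sum $\sum_{|\mu|+|\nu|\le N,\,|\nu|\ge 1}|\Gamma^\nu h||\Gamma^\mu\partial^2 u|$, which is bilinear in $\phi$ (through $h$) and $u$ and is handled by exactly the same low/high dichotomy, landing in the $M_{N/2+4}[\phi](M_N[\tilde u]+M_N[u])$ and $M_{N/2+4}[u]M_N[\phi]$ slots; the $|\partial h|+|h|/\la x\ra$ term is similar, with the $1/\la x\ra$ only helping.

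The main obstacle will be bookkeeping the derivative counts so that the indices on the $M$-quantities come out exactly as in \eqref{MNstep1} — in particular making sure that (i) the $\psi_2$ terms, which genuinely cost two derivatives, are charged to $M_{N+1}[\psi_2]$ only when $\psi_2$ is the high-derivative factor and to $M_{N/2+4}[\psi_2]$ otherwise, and that the $A,B$ perturbation terms (with $\phi$) never need $M_{N+1}[\phi]$ because when $\phi$ is high it carries only the $\le N$ vector fields coming from a $\Gamma^{\le N}$ hitting a coefficient, not an extra $\partial^2$; and (ii) the borderline integrability in time and in $\la x\ra$ of the far-zone pieces actually closes — this uses that $M_N$ contains the weighted $L^2L^2$ norm with weight $\la x\ra^{-3/4}$, so that a product $\la x\ra^{-3/2}(\text{low})\cdot(\text{high})$ integrates against one copy of $\la x\ra^{-3/4}$ from the high factor leaving $\la x\ra^{-3/4}$ to spare, and that the $F$-type pieces, being compactly supported away from the far zone or decaying like $\la x\ra^{-3/2}$, sit in $L^1L^2(S_t)$ after a Schwarz inequality in the localized-dyadic norms — exactly the kind of estimate Proposition~\ref{propWeightedStrichartz} is shaped for, though here we only need the cruder $\beta_{2R_0}$-localized version so the full weighted Strichartz is not invoked at this stage. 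Once the dictionary of terms is fixed, each individual estimate is routine; the care is entirely in the accounting, and in remembering that the smallness of $\tilde\delta$ is used both to satisfy \eqref{smallpert} and to bootstrap the self-interaction term $h^{IJ,\alpha\beta}\partial_\beta w^J\partial_\alpha w^I$-type contribution that Proposition~\ref{kss_prop} already absorbs internally.
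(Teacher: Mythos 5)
Your overall strategy --- apply Proposition~\ref{kss_prop} with $h$ built from the $\phi$-dependent quasilinear coefficients (the correct sign is $h^{IK,\alpha\beta}=-A^{\alpha\beta}_{IJK}\phi^J-B^{\gamma\alpha\beta}_{IJK}\partial_\gamma\phi^J$, but that is immaterial) and then estimate the resulting products by a low/high dichotomy via \eqref{weighted-Sob} --- is the same as the paper's. However, there are two genuine gaps. First, you assert that \eqref{kss} bounds ``the cutoff $\beta_{2R_0}\Gamma^{\le N}u$ pieces'' of the left side of \eqref{MNstep1}. It does not: the left side of \eqref{kss} contains only $\Gamma^{\le N}u'$, i.e.\ always with a derivative. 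The undifferentiated near-obstacle terms need a separate argument, and this is exactly where the Dirichlet condition enters: on $\supp \beta_{2R_0}$ the coefficients of $\partial_\alpha$, $\Omega_{ij}$, and $r\partial_r$ are bounded, so $\Gamma^{\le N}u$ is controlled by $\Gamma^{\le N-1}u'$ plus $(t\partial_t)^{\le N}u$; since $t\partial_t$ preserves the boundary condition, one may integrate off of $\bdy$ (a Poincar\'e-type step on the bounded region) to bound $(t\partial_t)^{\le N}u$ by $\Gamma^{\le N}u'$. This is not a triviality --- the coefficient $t$ of $t\partial_t$ is unbounded in time --- and without it two of the four terms on the left of \eqref{MNstep1} are never estimated.

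Second, your bilinear mechanism (put all of the $\la x\ra^{-3/2}$ decay from \eqref{weighted-Sob} on the low factor and absorb the high factor into the weighted $L^2L^2$ component of $M_N$) cannot handle the term $\sup_{s\in[0,t]}\|\Gamma^{\le N-1}\Box u(s,\cd)\|_{L^2(\ext)}$ appearing in \eqref{kss}: that is a fixed-time norm, so there is no time integral into which the high factor can be absorbed. There the high factor must be measured in $L^\infty L^2$, and whenever it carries no derivative (e.g.\ the zero-derivative instance of $\Gamma^{\le N}\partial^{\le 1}\phi$ or $\Gamma^{\le N}\partial^{\le 1}\tilde{u}$) its energy norm inside $M_N$ costs a factor $(1+t)^{\delta}$. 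The paper avoids this by leaving a weight $\la x\ra^{-1}$ on the undifferentiated factor and applying the Hardy inequality of Lemma~\ref{hardyLemma} to trade $\la x\ra^{-1}\Gamma^{\le N}(\cdot)$ for $\nabla\Gamma^{\le N}(\cdot)$, which is bounded uniformly in $t$; your proposal never invokes Lemma~\ref{hardyLemma}, and without it (or a substitute) the $L^\infty L^2$ contributions pick up an unbounded $(1+t)^{\delta}$ factor. (Your near/far splitting of the forcing into $G$ and $F$ is unnecessary --- the paper simply estimates the products in $L^1L^2$, $L^2L^2$, and $L^\infty L^2$ as dictated by the right side of \eqref{kss} --- and note that, as stated, Proposition~\ref{kss_prop} requires $G$ to be supported in $\{|x|<2\}$, not merely near the obstacle; but these are stylistic points rather than errors.)
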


The second lemma controls those portions of $M_N[u]$ that do not
contain a derivative and are away from the boundary.
\begin{lemma}\label{lemmaMNnod}
For $u\in (C^\infty(\R_+\times \ext))^M$ solving \eqref{genEq} and any $N$ and
any $t>0$, we have
\begin{multline}
  \label{MNstep2}
  (1+t)^{-\delta}\|(1-\beta_{R_0}(|x|)) \Gamma^{\le N}
  u(t,\cd)\|_{L^2(\ext)} + \|\la x\ra^{-3/4}
  (1-\beta_{R_0}(|x|))\Gamma^{\le N} u\|_{L^2L^2(S^\K_t)}\\
\lesssim \|\nabla^{\le N} u'(0,\cd)\|_{L^2}+ \Bigl(\Bigl[1+M_{N/2+2}[\phi]\Bigr]M_{N/2+3}[\tilde{u}]+M_{N/2+3}[u]\Bigr)
  M_N[\phi]
\\+
M_{N/2+4}[\phi]\Bigl(\Bigl[1+M_{N/2+4}[\phi]+M_{N/2+3}[\psi_1]+M_{N/2+4}[\psi_2]\Bigr]
M_N[\tilde{u}]+M_N[u]\Bigr) \\
+ \Bigl(M_{N/2+4}[\psi_1] + M_{N/2+4}[\psi_2]\Bigr)M_N[\tilde{u}] 
+ M_{N/2+3}[\tilde{u}] \Bigl[1+M_{N/2+2}[\phi]\Bigr]\Bigl(M_N[\psi_1] + M_{N+1}[\psi_2]\Bigr)
\\+ M_{N/2+2}[\phi] \|\la x\ra^{3/4} \Gamma^{\le N-1}\Box
\tilde{u}\|_{L^2L^2}
+M_N[\phi]\|\la x\ra^{3/4} \Gamma^{\le N/2+2}\Box
\tilde{u}\|_{L^2L^2}
\\+\sum_{j=1,2} M_{N/2+2}[\tilde{u}] \|\la x\ra^{3/4}\Gamma^{\le N}
\Box \psi_j\|_{L^2L^2} +\sum_{j=1,2} M_N[\tilde{u}] \|\la x\ra^{3/4} \Gamma^{\le
  N/2+3}\Box\psi_j\|_{L^2L^2}
\\+M_N[\tilde{u}] \|\la x\ra^{3/4}\Gamma^{\le N/2+2}\Box
\phi\|_{L^2L^2}
+M_N[u]\|\la x\ra^{3/4} \Gamma^{\le N/2+2}\Box \phi\|_{L^2L^2}
\\+M_N[\phi]\|\la x\ra^{3/4} \Gamma^{\le N/2+3}\Box u\|_{L^2L^2}
\end{multline}
provided $|\partial^{\le 1} \phi| < \tilde{\delta}$ for some
$\tilde{\delta}>0$ sufficiently small.\end{lemma}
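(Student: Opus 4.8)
The plan is to reduce \eqref{MNstep2} to the boundaryless weighted local energy bounds of Proposition~\ref{propWeightedStrichartz} by cutting away from the obstacle. Fix a multiindex $\mu$ with $|\mu|\le N$ and set $w^I=(1-\beta_{R_0}(|x|))\,\Gamma^\mu u^I$. Since $\K\subset\{|x|<1\}\subset\{|x|<R_0\}$ and $1-\beta_{R_0}$ vanishes on $\{|x|\le R_0\}$, the function $w^I$ extends smoothly by zero to all of $\R^4$, and because $\K$ and the supports of $f,g$ lie in $\{|x|<R_0\}$ while the data of \eqref{genEq} satisfy the compatibility conditions, $w^I$ has vanishing Cauchy data. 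From
\[\Box_{c_I}w^I=(1-\beta_{R_0})\,\Box_{c_I}\Gamma^\mu u^I+[\Box_{c_I},\,1-\beta_{R_0}]\,\Gamma^\mu u^I,\]
the commutator is supported in the compact annulus $\{R_0\le|x|\le 2R_0\}$, where it is bounded by $C_{R_0}\bigl(|\Gamma^{\le N}u'|+|\Gamma^{\le N}u|\bigr)$; fed into \eqref{cpctF} it contributes at most the local portions $\|\Gamma^{\le N}u'\|_{L^2L^2}$ and $\|\beta_{2R_0}\Gamma^{\le N}u\|_{L^2L^2}$ of $M_N[u]$ over this annulus, which by Lemma~\ref{lemmaMNle} are dominated by the right side of \eqref{MNstep1}, hence by that of \eqref{MNstep2}. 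For the first term, which is supported in $\{|x|>1\}$, we use that $[\Box_{c_I},\Gamma]$ is a multiple of $\Box_{c_I}$ (producing the $\Gamma^{\le N-1}\Box u$ bookkeeping terms), substitute the nonlinearity of \eqref{genEq}, and apply \eqref{lenod} to control $(1+t)^{-\delta}\|(1-\beta_{R_0})\Gamma^{\le N}u(t,\cd)\|_{L^2(\ext)}$ together with \eqref{weightedStrichartz} (with $\gamma=\tfrac14$) to control $\|\la x\ra^{-3/4}(1-\beta_{R_0})\Gamma^{\le N}u\|_{L^2L^2(S^\K_t)}$. Everything is thereby reduced to bounding weighted $L^1L^1L^2(S_t)$ norms of $(1-\beta_{R_0})\,\Gamma^{\le N}$ of each term in the nonlinearity of \eqref{genEq}.

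For each of these terms we expand by the Leibniz rule and, in each resulting product, place in $L^2_tL^\infty_x$ whichever of the two factors carries at most $N/2$ vector fields. By the dyadic form of \eqref{weighted-Sob}, on $\{|x|\gtrsim R_0\}$ this factor obeys $\|\la x\ra^{3/4}(\,\cdot\,)\|_{L^2_tL^\infty_x}\lesssim M_{N/2+3}[\,\cdot\,]$ (or $M_{N/2+4}[\,\cdot\,]$ if it already carries a derivative), while the remaining factor, kept in $L^2_tL^1_rL^2_\omega$, is controlled through the $\la x\ra^{-3/4}$-weighted $L^2L^2$ pieces of the appropriate $M_N[\,\cdot\,]$. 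The only difficulty is that this pairing alone is short by one power of $\la x\ra$ whenever the factor left in $L^2_tL^1_rL^2_\omega$ still carries a derivative --- in particular for the genuinely first-order term $a^\alpha_{IJK}\phi^J\partial_\alpha\tilde u^K$ and its companion $a^\alpha_{IJK}\tilde u^J\partial_\alpha\psi_1^K$, which for systems cannot be written in the divergence form $\partial_\alpha(\cdot)$ exploited in the scalar treatments \cite{Hormander,MS4}.

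The remedy, and the purpose of recording the estimate \eqref{mtt_lem} from \cite{MTT}, is to split each dyadic space-time region $\{|x|\sim R\}\times\{t\sim T\}$ according to whether $R\le c_1 T/8$ or not. Well inside the light cone we apply \eqref{mtt_lem} with $h\equiv 0$ to a suitable vector-field derivative of $\tilde u^K$ (resp.\ of $\psi_1^K$), exchanging the surviving $\partial_\alpha$ for the factor $R^{-1}\sim\la x\ra^{-1}$ needed to close, at the cost of the lower-order terms $\|S^{\le1}(\cdot)\|_{L^2L^2}$ and $R\,\|\Box_{c_K}(\cdot)\|_{L^2L^2}$ over a slightly enlarged box. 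The first of these is reabsorbed into the no-derivative part of $M_N[\tilde u]$ (resp.\ $M_N[\psi_1]$), and the second produces precisely the contributions $M_{N/2+2}[\phi]\|\la x\ra^{3/4}\Gamma^{\le N-1}\Box\tilde u\|_{L^2L^2}$, $M_N[\phi]\|\la x\ra^{3/4}\Gamma^{\le N/2+2}\Box\tilde u\|_{L^2L^2}$, $M_{N/2+2}[\tilde u]\|\la x\ra^{3/4}\Gamma^{\le N}\Box\psi_j\|_{L^2L^2}$, $M_N[\tilde u]\|\la x\ra^{3/4}\Gamma^{\le N/2+3}\Box\psi_j\|_{L^2L^2}$, and the $\Box\phi$ analogues appearing on the right of \eqref{MNstep2}. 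In the complementary region $\la x\ra\sim\la t\ra$, and the extra derivative is instead absorbed directly, e.g.\ via the pointwise inequality $\la c_K t-r\ra\,|\partial w|\lesssim|Sw|+\la x\ra\,|\Box_{c_K}w|$ underlying \eqref{mtt_lem}, so that no essentially new term arises.

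Finally, the quasilinear terms $A^{\alpha\beta}_{IJK}\phi^J\partial_\alpha\partial_\beta u^K$ and $B^{\alpha\beta\gamma}_{IJK}\partial_\alpha\phi^J\partial_\beta\partial_\gamma u^K$ (and the $\partial^2\psi_2$ companions) carry a second derivative that \eqref{MN} does not control. For the $\partial_t^2$ component we use the equations $\partial_t^2 u^K=c_K^2\Delta u^K+(\Box u)^K$; the $(\Box u)^K$ piece is recorded directly, contributing the terms $M_N[\phi]\|\la x\ra^{3/4}\Gamma^{\le N/2+3}\Box u\|_{L^2L^2}$ and $M_N[u]\|\la x\ra^{3/4}\Gamma^{\le N/2+2}\Box\phi\|_{L^2L^2}$ of \eqref{MNstep2}, while the $c_K^2\Delta u^K$ piece and the mixed second derivatives are of the form $\partial(\partial u^K)$ and are handled as above by a further application of \eqref{mtt_lem}, the accompanying factor $R$ being cancelled by the $\la x\ra^{-1}$ gained on the outer derivative, with the resulting $u$-contribution absorbed using the smallness $|\partial^{\le1}\phi|<\tilde\delta$. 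Summing the geometric series in the dyadic parameters, using $N\ge 12$ to keep every auxiliary order $N/2+2,\dots,N/2+4$ (and $N+1$ for $\psi_2$) available, and summing over $|\mu|\le N$ then yields \eqref{MNstep2}. The principal obstacle throughout is the middle step: extracting, for the non-divergence-form first-order term peculiar to systems, exactly the one extra power of $\la x\ra^{-1}$ from \eqref{mtt_lem} while keeping the resulting $\Box$-of-$(\phi,\tilde u,\psi_j,u)$ terms at the vector-field orders dictated by \eqref{MNstep2}.
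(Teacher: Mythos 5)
Your overall reduction (cutting off with $1-\beta_{R_0}(|x|)$, treating the commutator with \eqref{cpctF}, and estimating the remaining forcing through the weighted $L^1L^1L^2$ norms of \eqref{lenod} and \eqref{weightedStrichartz}) matches the paper, and your use of \eqref{mtt_lem} to trade a derivative for $\la x\ra^{-1}$ inside the light cone is the right mechanism for the terms in which the differentiated factor carries at most $N-1$ vector fields. The genuine gap is at top order, exactly the case the paper isolates: when all $N$ vector fields land on the factor carrying the derivative(s), e.g.\ the piece $\phi^J\partial_\alpha\Gamma^\mu\tilde{u}^K$ of $\Gamma^\mu(a^\alpha_{IJK}\phi^J\partial_\alpha\tilde{u}^K)$ with $|\mu|=N$, or $\phi^J\partial_\alpha\partial_\beta\Gamma^\mu u^K$ in the quasilinear terms. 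Applying \eqref{mtt_lem} there costs a scaling vector field, so it produces $S^{\le 1}\Gamma^{\le N}\tilde{u}$ and $\Box$ of $\Gamma^{\le N}\tilde{u}$, i.e.\ it requires $M_{N+1}[\tilde{u}]$ and $\|\la x\ra^{3/4}\Gamma^{\le N}\Box\tilde{u}\|_{L^2L^2}$ (similarly $M_{N+1}[u]$ and $\Gamma^{\le N}\Box u$); none of these appear on the right side of \eqref{MNstep2} (only the $\psi_2$ slot is allowed the extra order), and admitting them would wreck the iteration. Keeping $\Gamma^{\le N}\partial\tilde{u}$ in the energy or local energy norms does not close either, as you yourself observe: the available weights $r^{-1}$ and $r^{-5/4}$ fall short of the $r^{-3/2}$ needed to pair two $\la x\ra^{-3/4}$-weighted $L^2L^2$ norms. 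The paper resolves this top-order case not with \eqref{mtt_lem} but with the product-rule divergence extraction $\phi\,\partial_\alpha\Gamma^\mu\tilde{u} = \partial_\alpha(\phi\,\Gamma^\mu\tilde{u}) - \partial_\alpha\phi\,\Gamma^\mu\tilde{u}$ (and its analogues with $A$, $B$, and $(\tilde{u},\psi_2)$), feeding the total-derivative piece into the divergence-form estimate \eqref{divFormLE}; note that this rearrangement is still available for systems---what fails for systems is only the special identity $u\partial u=\tfrac12\partial(u^2)$---while the leftover $\partial_\alpha\phi\,\Gamma^\mu\tilde{u}$ has no derivative on the high factor. You dismiss divergence structure entirely and supply no substitute for this case, so the argument as written cannot close at regularity $N$.

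A second, related issue concerns the $\Gamma^{\le N-1}\partial^2 u$ contribution. Substituting $\partial_t^2u^K=c_K^2\Delta u^K+(\Box u)^K$ and reapplying the flat \eqref{mtt_lem} to $\partial(\partial u)$ does not treat the spatial second derivatives and is circular at the stated regularity: the flat exchange produces $2^k\|\Box\Gamma^{\le N}u\|_{L^2_{2^j}L^2_{2^k}}$, and $\Gamma^{\le N}\Box u$ contains the quasilinear term $\phi\,\partial^2\Gamma^{\le N}u$, i.e.\ the very quantity being estimated, and at order $N$ rather than the $\Gamma^{\le N/2+3}\Box u$ permitted by \eqref{MNstep2}. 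This is precisely why the paper proves and invokes the $h$-perturbed version of \eqref{mtt_lem} with $h^{IK,\alpha\beta}=-A^{\alpha\beta}_{IJK}\phi^J-B^{\gamma\alpha\beta}_{IJK}\partial_\gamma\phi^J$, so that the error is $\Box_h\Gamma^{\le N}u$, which the structure \eqref{boxhvf} bounds by admissible quantities; the smallness $|\partial^{\le 1}\phi|<\tilde{\delta}$ enters there, not in the way you indicate. Finally, your outer-region pointwise inequality $\la c_Kt-r\ra|\partial w|\lesssim |Sw|+\la x\ra|\Box w|$ is not valid near the cone and is not needed: for $r\gtrsim c_1 s$ the weight $r^{-1}\sim\la s\ra^{-1}$ already yields the time integrability, with the derivative factor simply retained in the energy norm.
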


We shall delay the proofs of the lemmas and will instead first
demonstrate that the lemmas can be used to complete the proof of
Theorem~\ref{main_thm}.  We will return to the proofs of the lemmas in
the last two subsections, which will then complete the proof of the
main result.

\subsubsection{Proof of Theorem~\ref{main_thm} assuming
  Lemma~\ref{lemmaMNle} and Lemma~\ref{lemmaMNnod} }
We shall solve \eqref{generalEquation} via iteration.  To that end, we
let $u_0\equiv 0$ and define $u_n$ for $n\ge 1$ to solve 
\begin{equation}
  \label{itEquation}
  \begin{cases}
    \Box_{c_I} u_n^I = Q^I(u_{n-1}, u_{n-1}', u_n''),\quad (t,x)\in \R_+\times
    \ext,\quad I=1,2,\dots,M\\
u^I_n(t,\cd)|_\bdy=0,\quad t\ge 0,\\
u^I_n(0,\cd) = f^I,\quad \partial_t u^I_n(0,\cd)=g^I.
  \end{cases}
\end{equation}
We shall begin by showing that the sequence $(M_N[u_n])_{n\in \N}$ is bounded
for $N$ sufficiently large, and we shall then subsequently use this to
show that the sequence $(u_n)$ is Cauchy and thus converges.

{\em Boundedness:} Our first goal is to argue inductively to show that there is a universal constant $C_0$ so
that 
\begin{equation}\label{bdd_goal}
M_N[u_n](T) \le 10 C_0\varepsilon
\end{equation}
for all $n\ge 0$ and all $T>0$ provided that $N$ is sufficiently large.

We first show that $M_N[u_1](T)\le C_0\varepsilon$.  Noticing that
$u_1$ solves \eqref{genEq} with $\phi, \psi_j, \tilde{u}\equiv 0$,
this base case follows immediately from \eqref{MNstep1}, \eqref{MNstep2},
and \eqref{smallness}.

We now assume that \eqref{bdd_goal} holds for $u_l$ for $l=1,2,\dots, n-1$ and use the
lemmas to establish the same for $u_n$.  To do so, we note that $u_n$
solves \eqref{genEq} with $\phi=\tilde{u}=u_{n-1}$ and $\psi_j\equiv
0$.  We also note that
\begin{multline*}|\Gamma^{\le N-1} \Box u_{n-1}|\lesssim |\Gamma^{\le
  (N-1)/2} \partial^{\le 1}u_{n-2}| |\Gamma^{\le N-1}\partial
u_{n-2}| 
\\+ |\Gamma^{\le (N-1)/2} \partial u_{n-2}| |\Gamma^{\le
  N-1}\partial^{\le 1} u_{n-2}|
+ |\Gamma^{\le (N-1)/2}\partial^{\le 1} u_{n-2}| |\Gamma^{\le
  N} \partial u_{n-1}| \\+ |\Gamma^{\le (N-1)/2 + 1}\partial u_{n-1}|
|\Gamma^{\le N-1}\partial^{\le 1} u_{n-2}|.
\end{multline*}
Thus, by \eqref{weighted-Sob}, we have
\begin{multline}\label{boxun1}
\|\la x\ra^{3/4} \Gamma^{\le N-1} \Box u_{n-1}\|_{L^2L^2} \lesssim
\|\la x\ra^{-3/4} \Gamma^{\le (N-1)/2 + 3} \partial^{\le 1}
u_{n-2}\|_{L^2L^2} \|\Gamma^{\le N-1} \partial u_{n-2}\|_{L^\infty
  L^2} 
\\+ \|\Gamma^{\le (N-1)/2+3} \partial u_{n-2}\|_{L^\infty L^2}  \|\la
x\ra^{-3/4} \Gamma^{\le N-1} \partial^{\le 1} u_{n-2}\|_{L^2L^2}
\\+\|\la x\ra^{-3/4} \Gamma^{\le (N-1)/2 + 3} \partial^{\le 1}
u_{n-2}\|_{L^2L^2} \|\Gamma^{\le N} \partial u_{n-1}\|_{L^\infty L^2} 
\\+ \|\Gamma^{\le (N-1)/2+4} \partial u_{n-1}\|_{L^\infty L^2} \|\la
x\ra^{-3/4} \Gamma^{\le N-1} \partial^{\le 1}u_{n-1}\|_{L^2L^2},
\end{multline}
which gives that this is 
\begin{multline*}\O\Bigl( M_{(N-1)/2+3}[u_{n-2}] M_{N-1}[u_{n-2}] +
M_{(N-1)/2+3}[u_{n-2}] M_N[u_{n-1}] \\+ M_{(N-1)/2+4}[u_{n-1}]
M_{N-1}[u_{n-2}]\Bigr).
\end{multline*}
Since $N\ge 12$, we have that $(N/2)+2 \le N-1$, so this same bound
may be applied for $\|\la x\ra^{3/4}\Gamma^{\le (N/2)+2} \Box
u_{n-1}\|_{L^2L^2}$.  Quite similarly, we have
\begin{multline}\label{boxun}
  \|\la x\ra^{3/4}\Gamma^{\le N-1}\Box u_{n}\|_{L^2L^2} = \O\Bigl( M_{(N-1)/2+3}[u_{n-1}] M_{N-1}[u_{n-1}] \\+
M_{(N-1)/2+3}[u_{n-1}] M_N[u_{n}] + M_{(N-1)/2+4}[u_{n}]
M_{N-1}[u_{n-1}]\Bigr).
\end{multline}

Assuming that $N\ge 12$ so that $(N/2)+4 \le N$, \eqref{MNstep1},
\eqref{MNstep2}, \eqref{smallness}, and the inductive hypothesis show that
\[  M_N[u_n] \le C_0 \varepsilon + C^2 (1+C\varepsilon) \varepsilon^2 +
  C \varepsilon (1+C\varepsilon) M_N[u_n],\]
which implies \eqref{bdd_goal} so long as $\varepsilon$ is
sufficiently small.

{\em Convergence:}  To complete the proof, we will show that
\begin{equation}
  \label{Cauchy_goal}
  M_{N-1}[u_n-u_{n-1}] (T) \le \Bigl(\frac{1}{2}\Bigr)^{n-1} M_{N-1}[u_1](T)
\end{equation}
for every $n$ and all $T>0$.  This will imply that the sequence is
Cauchy and, thus, convergent.  Per standard arguments, the limiting value is our desired
solution.

As the $n=1$ case is trivial, we shall first show the base case
\begin{equation}
  \label{Cauchy_basecase}
  M_{N-1}[u_2-u_1](T)\le \frac{1}{2} M_{N-1}[u_1](T).
\end{equation}
We shall then subsequently show that
\begin{equation}
  \label{Cauchy-goal-2}
  M_{N-1}[u_n - u_{n-1}](T) \le \frac{1}{4}M_{N-1}[u_{n-1}-u_{n-2}](T) +
  \frac{1}{8} M_{N-1}[u_{n-2}-u_{n-3}](T),\quad n\ge 3.
\end{equation}
Then \eqref{Cauchy_goal} follows from a straightforward argument
relying on strong induction, which completes the proof modulo the
proofs of Lemma \ref{lemmaMNle} and Lemma \ref{lemmaMNnod}.

\begin{proof}[Proof of \eqref{Cauchy_basecase}]
Observe that $u=u_2-u_1$ solves \eqref{genEq} with vanishing data and with
$\phi=\tilde{u}=\psi_2 = u_1$ and $\psi_1\equiv 0$.
Then \eqref{MNstep1}, \eqref{MNstep2}, and \eqref{bdd_goal} give
\begin{multline*}
  M_{N-1}[u_2-u_1] \le C\varepsilon \Bigl(M_{N-1}[u_1] +
  M_{N-1}[u_2-u_1]\Bigr) + C\varepsilon \|\la x\ra^{3/4} \Gamma^{\le
    N-1}\Box u_1\|_{L^2L^2} \\+ \|\la x\ra^{3/4} \Gamma^{\le N-1}\Box
  u_1\|_{L^2L^2} M_{N-1}[u_2-u_1] + M_{N-1}[u_1]\|\la x\ra^{3/4}\Gamma^{\le
    N-2} \Box (u_2-u_1)\|_{L^2L^2}
\end{multline*}
since $N\ge 12$ and $0<\varepsilon \ll 1$.  Note, however, that $\Box
u_1 = 0$.

We record that
\begin{multline*}
  |\Gamma^{\le N-2}\Box(u_2-u_1)| \lesssim |\Gamma^{\le N/2
    } \partial^{\le 1} u_1| |\Gamma^{\le N-1} \partial u_1| + 
  |\Gamma^{\le N/2}\partial u_1| | \Gamma^{\le N-2} u_1|
\\+ |\Gamma^{\le N/2-1} \partial^{\le 1} u_1| |\Gamma^{\le
  N-1}\partial(u_2-u_1)| + |\Gamma^{\le N/2} \partial(u_2-u_1)|
|\Gamma^{\le N-2} \partial^{\le 1} u_1|
\end{multline*}
since $N/2+4 \le N-2$ as $N\ge 12$.  Then, by \eqref{weighted-Sob}, we
see that
\begin{multline*}
  \|\la x\ra^{3/4} \Gamma^{\le N-2} \Box(u_2-u_1)\|_{L^2L^2} \lesssim
  \|\la x\ra^{-3/4} \Gamma^{\le N/2+3}\partial^{\le 1} u_1\|_{L^2L^2}
  \|\Gamma^{\le N-1} \partial u_1\|_{L^\infty L^2} \\+
  \|\Gamma^{\le N/2+3} \partial u_1\|_{L^\infty L^2} \|\la
  x\ra^{-3/4}\Gamma^{\le N-2} u_1\|_{L^2L^2} \\+ \|\la x\ra^{-3/4}
  \Gamma^{\le N/2+2}\partial^{\le 1} u_1\|_{L^2L^2} \|\Gamma^{\le
    N-1} \partial (u_2-u_1)\|_{L^\infty L^2} \\+ \|\Gamma^{\le
    N/2+3} \partial(u_2-u_1)\|_{L^\infty L^2} \|\la x\ra^{-3/4}
  \Gamma^{\le N-2}\partial^{\le 1} u_1\|_{L^2L^2}.
\end{multline*}
It then follows from \eqref{bdd_goal} that this is $\O(\varepsilon^2 +
\varepsilon M_{N-1}[u_2-u_1])$.  Plugging this into the above completes the proof
provided $\varepsilon$ is sufficiently small.
\end{proof}

\begin{proof}[Proof of \eqref{Cauchy-goal-2}]
  Similar to the above, we note that $u=u_n-u_{n-1}$ solves
  \eqref{genEq} with $f,g\equiv 0$, $\phi=u_{n-1}$,
$\tilde{u}=u_{n-1}-u_{n-2}$, $\psi_1=u_{n-2}$, and $\psi_2=u_{n-1}$.
Thus, \eqref{MNstep1}, \eqref{MNstep2}, and \eqref{bdd_goal} give 
\begin{multline}\label{Cauchy-goal-2a}
  M_{N-1}[u_n-u_{n-1}] \le C\varepsilon M_{N-1}[u_{n-1}-u_{n-2}]
  + C\varepsilon M_{N-1}[u_n-u_{n-1}]
\\ + C M_{N-1}[u_{n-1}-u_{n-2}] \sum_{j=1,2}
\|\la
x\ra^{3/4}\Gamma^{\le N-1} \Box u_{n-j}\|_{L^2L^2} \\+
M_{N-1}[u_n-u_{n-1}] \|\la x\ra^{3/4} \Gamma^{\le N-2}\Box
u_{n-1}\|_{L^2L^2} \\+C \varepsilon \|\la x\ra^{3/4}\Gamma^{\le N-2} \Box
(u_{n-1}-u_{n-2})\|_{L^2L^2}\\+ C\varepsilon \|\la x\ra^{3/4} \Gamma^{\le N-2}\Box (u_n-u_{n-1})\|_{L^2L^2}
\end{multline}
provided $N\ge 12$.  Notice that there is a loss of regularity
associated to the $\psi_2$ piece, which requires the boundedness in a
space with an extra degree of regularity.  This is characteristic of
quasilinear problems.

By \eqref{boxun1}, \eqref{boxun}, and \eqref{bdd_goal}, the third and fourth terms in the
right side of \eqref{Cauchy-goal-2} are controlled by the first two
terms.  It only remains to show
\begin{equation}\label{Cauchy-goal-2b}\|\la x\ra^{3/4} \Gamma^{\le
    N-2} \Box (u_j-u_{j-1})\|_{L^2L^2}\le C M_{N-1}[u_{j-1}-u_{j-2}]
+ C M_{N-1}[u_j-u_{j-1}]
\end{equation}
for $j=n-1$ or $j=n$.

To this end, we calculate
\begin{multline*}
  |\Gamma^{\le N-2} \Box (u_j-u_{j-1})| \lesssim |\Gamma^{\le
    N/2-1} \partial^{\le 1}
  u_{j-1}| |\Gamma^{\le N-2} \partial (u_{j-1}-u_{j-2})| \\+
  |\Gamma^{\le N/2-1}\partial(u_{j-1}-u_{j-2})| |\Gamma^{\le
    N-2} \partial^{\le 1} u_{j-1}|
+ |\Gamma^{\le N/2-1} \partial^{\le 1}
(u_{j-1}-u_{j-2})||\Gamma^{\le N-2} \partial u_{j-2}| 
\\+ |\Gamma^{\le N/2-1}\partial u_{j-2}| |\Gamma^{\le
  N-2} \partial^{\le 1}
(u_{j-1}-u_{j-2})|
\\+ |\Gamma^{N/2-1} \partial^{\le 1} u_{j-1}| |\Gamma^{\le
  N-1} \partial (u_j-u_{j-1})|
+ |\Gamma^{\le N/2} \partial (u_j-u_{j-1})| |\Gamma^{\le
  N-2}\partial^{\le 1} u_{j-1}|
\\+ |\Gamma^{\le N/2-1} \partial^{\le 1} (u_{j-1}-u_{j-2})| |\Gamma^{\le N-1} \partial
u_{j-1}| + |\Gamma^{\le N/2} \partial u_{j-1}| |\Gamma^{\le
  N-2} \partial^{\le 1}(u_{j-1}-u_{j-2})|.
\end{multline*}
Applying \eqref{weighted-Sob}, we obtain
\begin{multline*}
   \|\la x\ra^{3/4} \Gamma^{\le N-2} \Box (u_j-u_{j-1})\|_{L^2L^2}
   \\\lesssim \|\la x\ra^{-3/4} \Gamma^{\le
    N/2+2} \partial^{\le 1}
  u_{j-1}\|_{L^2L^2}  \|\Gamma^{\le N-2} \partial
  (u_{j-1}-u_{j-2})\|_{L^\infty L^2} \\+
 \|\Gamma^{\le N/2+2}\partial(u_{j-1}-u_{j-2})\|_{L^\infty L^2} \|\la
 x\ra^{-3/4} \Gamma^{\le
    N-2} \partial^{\le 1} u_{j-1}\|_{L^2L^2}
\\+ \|\la x\ra^{-3/4} \Gamma^{\le N/2+2} \partial^{\le 1}
(u_{j-1}-u_{j-2})\|_{L^2L^2} \|\Gamma^{\le N-2} \partial
u_{j-2}\|_{L^\infty L^2} 
\\+ \|\Gamma^{\le N/2+2}\partial u_{j-2}\|_{L^\infty L^2} \|\la
x\ra^{-3/4} \Gamma^{\le
  N-2} \partial^{\le 1}
(u_{j-1}-u_{j-2})\|_{L^2L^2}
\\+ \|\la x\ra^{-3/4} \Gamma^{N/2+2} \partial^{\le 1}
u_{j-1}\|_{L^2L^2}  \|\Gamma^{\le
  N-1} \partial (u_j-u_{j-1})\|_{L^\infty L^2}
\\+ \|\Gamma^{\le N/2+3} \partial (u_j-u_{j-1})\|_{L^\infty L^2} \|\la
x\ra^{-3/4} \Gamma^{\le
  N-2}\partial^{\le 1} u_{j-1}\|_{L^2L^2}
\\+ \|\la x\ra^{-3/4} \Gamma^{\le N/2+2} \partial^{\le 1} (u_{j-1}-u_{j-2})\|_{L^2L^2} \|\Gamma^{\le N-1} \partial
u_{j-1}\|_{L^\infty L^2} 
\\+ \|\Gamma^{\le N/2+3} \partial u_{j-1}\|_{L^\infty L^2} \|\la x\ra^{-3/4}\Gamma^{\le
  N-2} \partial^{\le 1}(u_{j-1}-u_{j-2})\|_{L^2L^2}.
\end{multline*}
From this, \eqref{Cauchy-goal-2b} follows immediately from \eqref{bdd_goal}.
\end{proof}

We now complete the proof of Theorem \ref{main_thm} by proving Lemma
\ref{lemmaMNle} and Lemma \ref{lemmaMNnod}.

\subsubsection{Proof of Lemma~\ref{lemmaMNle}}

We first note that the Dirichlet boundary conditions allow us to
  control the last two terms in the left side of \eqref{MNstep1} by the
  first two terms.  Here, the constant depends on $R_0$, but this is
  harmless.  Indeed, using the fact that $\partial_\alpha$,
  $\Omega_{ij}$, and $r\partial_r$ have bounded coefficients on the
  support of $\beta_{2R_0}$, we have
\begin{multline*}\|\Gamma^{\le N} u(t,\cd)\|_{L^2((\ext)\cap \{|x|<2R_0\})} \lesssim 
\|\Gamma^{\le N-1} u'(t,\cd)\|_{L^2((\ext)\cap \{|x|<2R_0\})}
\\+\|(t\partial_t)^{\le N} u(t,\cd)\|_{L^2((\ext)\cap \{|x|<2R_0\})}.
\end{multline*}
For the last term, we may use the fact that $t\partial_t$ preserves
the Dirichlet bounday conditions and integrate off of the boundary to
see that
\[\|(t\partial_t)^{\le N} u(t,\cd)\|_{L^2((\ext)\cap
    \{|x|<2R_0\})}\lesssim \|\Gamma^{\le N}
  u'(t,\cd)\|_{L^2((\ext)\cap \{|x|<2R_0\})}.\]

To bound the first two terms in \eqref{MNstep1}, we set
$h^{IK,\alpha\beta} = -A^{\alpha\beta}_{IJK}\phi^J -
B^{\gamma\alpha\beta}_{IJK} \partial_\gamma \phi^J$
and first note that
\begin{multline}\label{boxhvf}
  |\Gamma^{\le N} \Box_h u| +
  |[h^{IK,\alpha\beta}\partial_\alpha\partial_\beta, \Gamma^{\le N}]
  u^K| + |\Gamma^{\le N-1} \Box u|
\\\lesssim
|\Gamma^{\le N/2+1} \phi | |\Gamma^{\le N} \partial
  \tilde{u}| 
+|\Gamma^{\le N/2}\partial\tilde{u}| |\Gamma^{\le N} \partial^{\le 1}
\phi|
\\+
|\Gamma^{\le N/2} \partial \psi_1 | |\Gamma^{\le N} \partial^{\le 1}
  \tilde{u}| 
+|\Gamma^{\le N/2}\partial^{\le 1}\tilde{u}| |\Gamma^{\le N} \partial
\psi_1|
\\+|\Gamma^{\le N/2+1} \phi| |\Gamma^{\le N}\partial u| + |\Gamma^{\le
  N/2+1}\partial u| |\Gamma^{\le N} \partial^{\le 1} \phi|
\\+|\Gamma^{\le N/2}\partial^{\le 1} \tilde{u}| |\Gamma^{\le N+1}\partial \psi_2|
+ |\Gamma^{N/2+1} \partial\psi_2| |\Gamma^{\le N} \partial^{\le 1} \tilde{u}|.
\end{multline}
Due to \eqref{kss}, it suffices to control each of these terms in
$L^1L^2$, $L^2L^2$, and $L^\infty L^2$.

To control the terms in $L^1 L^2$, we argue as in \cite{KSS} and note that
\eqref{weighted-Sob} and the Cauchy-Schwarz inequality give
\begin{align*}
\int_0^T \|v w\|_{L^2}\,ds &\le \sum_{j\ge 0} \int_0^T \|v
w\|_{L^2_{2^j}}\, ds \lesssim \sum_{j\ge 0} \int_0^T 2^{-3j/2}
\|\Gamma^{\le 3} v\|_{L^2_{2^j}} \|w\|_{L^2_{2^j}}\,ds \\&\lesssim \sum_{j\ge
  0} \|\la x\ra^{-3/4} \Gamma^{\le 3} v\|_{L^2 L^2_{2^j}} \|\la
x\ra^{-3/4} w\|_{L^2L^2_{2^j}}\\
&\lesssim \|\la x\ra^{-3/4} \Gamma^{\le 3} v\|_{L^2L^2} \|\la x\ra^{-3/4} w\|_{L^2L^2}.
\end{align*}
Applying this to each term in \eqref{boxhvf} where $v$ is chosen to
be the lower order term, we see that the $L^1L^2$-norm of
\eqref{boxhvf} is controlled by a constant times
\begin{multline}\label{L1L2}
M_{N/2+4}[\phi]\Bigl(M_N[\tilde{u}]+M_N[u]\Bigr) + \Bigl(M_{N/2+3}[\tilde{u}]+M_{N/2+4}[u]\Bigr)
  M_N[\phi]\\
+ \Bigl(M_{N/2+3}[\psi_1] + M_{N/2+4}[\psi_2]\Bigr)M_N[\tilde{u}] 
+ M_{N/2+3}[\tilde{u}] \Bigl(M_N[\psi_1] + M_{N+1}[\psi_2]\Bigr).
\end{multline}

We argue similarly to control the terms in $L^2L^2$.  Here
\eqref{weighted-Sob} gives an excess of decay.  We keep the $L^2_t$
norm on the factor that may not contain a derivative so there is no
loss of decay as is associated with the first term in \eqref{MN}.
When each term in the right side of \eqref{boxhvf} is measured in
$L^2L^2$, we see that it is
\begin{multline*}
  \lesssim \|\la x\ra^{-3/4} \Gamma^{\le N/2+4} \phi \|_{L^2L^2} \Bigl(\|\Gamma^{\le N} \partial
  \tilde{u}\|_{L^\infty L^2} + \|\Gamma^{\le N} \partial u\|_{L^\infty L^2}\Bigr)
\\+\Bigl(\|\Gamma^{\le N/2+3}\partial\tilde{u}\|_{L^\infty L^2} +
\|\Gamma^{\le N/2 +4} \partial u\|_{L^\infty L^2}\Bigr) \|\la x\ra^{-3/4}\Gamma^{\le N} \partial^{\le 1}
\phi\|_{L^2L^2}
\\+
\Bigl(\|\Gamma^{\le N/2+3} \partial \psi_1\|_{L^\infty L^2} +
\|\Gamma^{N/2+4}\partial \psi_2\|_{L^\infty L^2}
\Bigr)\|\la x\ra^{-3/4}\Gamma^{\le N} \partial^{\le 1}
  \tilde{u}\|_{L^2L^2} 
\\+\|\la x\ra^{-3/4} \Gamma^{\le N/2+3}\partial^{\le 1}\tilde{u}\|_{L^2L^2}\Bigl(\|\Gamma^{\le N} \partial
\psi_1\|_{L^\infty L^2} + \|\Gamma^{\le N+1}\partial\psi_2\|_{L^\infty L^2}\Bigr),
\end{multline*}
which is in turn controlled by \eqref{L1L2}.

Finally, when measuring each term in $L^\infty L^2$, we apply
\eqref{weighted-Sob} and pair a portion of the decay with the term
that may not include a derivative as this allows for an application of
a Hardy inequality to prevent having to handle the growth in $t$ that
is coupled to the first term in \eqref{MN}.  Thus, when the right side
of \eqref{boxhvf} is in $L^\infty L^2$, we have that it is
\begin{multline*}
  \lesssim \|\la x\ra^{-1} \Gamma^{\le N/2+4} \phi \|_{L^\infty L^2} \Bigl(\|\Gamma^{\le N} \partial
  \tilde{u}\|_{L^\infty L^2} + \|\Gamma^{\le N} \partial u\|_{L^\infty L^2}\Bigr)
\\+\Bigl(\|\Gamma^{\le N/2+3}\partial\tilde{u}\|_{L^\infty L^2} +
\|\Gamma^{\le N/2 +4} \partial u\|_{L^\infty L^2}\Bigr) \|\la x\ra^{-1}\Gamma^{\le N} \partial^{\le 1}
\phi\|_{L^\infty L^2}
\\+
\Bigl(\|\Gamma^{\le N/2+3} \partial \psi_1\|_{L^\infty L^2} +
\|\Gamma^{N/2+4}\partial \psi_2\|_{L^\infty L^2}
\Bigr)\|\la x\ra^{-1}\Gamma^{\le N} \partial^{\le 1}
  \tilde{u}\|_{L^\infty L^2} 
\\+\|\la x\ra^{-1} \Gamma^{\le N/2+3}\partial^{\le
  1}\tilde{u}\|_{L^\infty L^2}\Bigl(\|\Gamma^{\le N} \partial
\psi_1\|_{L^\infty L^2} + \|\Gamma^{\le N+1}\partial\psi_2\|_{L^\infty L^2}\Bigr).
\end{multline*}
An application of Lemma \ref{hardyLemma} when the weighted terms do
not include $\partial$ shows that these terms are also bounded by
\eqref{L1L2}, which completes the proof.\qed

\subsubsection{Proof of Lemma~\ref{lemmaMNnod}}
For $|\mu|\le N$, we note that $(1-\beta_{R_0}(|x|))\Gamma^\mu u^I$
solves the boundaryless equation
\begin{multline*}\Box_{c_I} (1-\beta_{R_0}(|x|))\Gamma^\mu u^I = c_I^2 [\Delta,
  \beta_{R_0}(|x|)] \Gamma^\mu u^I
+ (1-\beta_{R_0}(|x|)) [\Box_{c_I}, \Gamma^\mu] u^I \\+
(1-\beta_{R_0}(|x|)) \Gamma^\mu \Box_{c_I}u^I
\end{multline*}
with vanishing initial data due to the support conditions on $f, g$.
By using \eqref{mtt_lem}, extra decay is available for terms
involving derivatives, but it comes at the cost of a vector field.
Thus, care is required when the highest number of vector fields lands
on the term with derivatives.  Similar care is required to prevent
having a term with too many vector fields resulting from the full
number of vector fields landing on a factor with two derivatives.  To
handle this, such terms are broken into a piece where the nonlinearity
is in divergence form, for which we have \eqref{divFormLE}, and a piece
where the trouble is avoided.

To this end, for any $|\mu|\le N$, we write
\begin{align*}
 (1-&\beta_{R_0}(|x|))\Gamma^\mu (a^\alpha_{IJK} \phi^J \partial_\alpha
 \tilde{u}^K) \\&=  \Bigl[(1-\beta_{R_0}(|x|))\Gamma^\mu (a^\alpha_{IJK} \phi^J \partial_\alpha \tilde{u}^K) - (1-\beta_{R_0}(|x|)) a^\alpha_{IJK}
\phi^J \partial_\alpha (\Gamma^\mu \tilde{u}^K)\Bigr] \\&\qquad\qquad-
(1-\beta_{R_0}(|x|)) a^\alpha_{IJK}\partial_\alpha \phi^J \Gamma^\mu \tilde{u}^K
+\partial_\alpha \Bigl[(1-\beta_{R_0}(|x|))
a^\alpha_{IJK} \phi^J \Gamma^\mu \tilde{u}^K\Bigr] 
\\&\qquad\qquad\qquad\qquad+\beta_{R_0}'(|x|) \frac{x_k}{r}a^k_{IJK} \phi^J
\Gamma^\mu \tilde{u}^K\\
&=\O\Bigl(|\Gamma^{\le N/2} \phi| |\Gamma^{\le N-1} \partial \tilde{u}| + |\Gamma^{\le
  N/2} \partial \tilde{u}| |\Gamma^{\le N} \phi| + |\partial
  \phi| |\Gamma^{\le N} \tilde{u}|\Bigr)
\\&\qquad\qquad+\partial_\alpha \Bigl[(1-\beta_{R_0}(|x|))
a^\alpha_{IJK} \phi^J \Gamma^\mu \tilde{u}^K\Bigr] 
+\beta_{R_0}'(|x|) \frac{x_k}{r}a^k_{IJK} \phi^J
\Gamma^\mu \tilde{u}^K.
\end{align*}
The terms involving $\psi_j$ allow for a loss of regularity.  As such,
less care is required here, and we simply note that
\[(1-\beta_{R_0}(|x|)) \Gamma^\mu (a^\alpha_{IJ}
  \tilde{u}^J \partial_\alpha \psi_1^K)=\O\Bigl(|\Gamma^{\le N/2}
  \tilde{u}| |\Gamma^{\le N} \partial \psi_1| + |\Gamma^{\le
    N/2}\partial \psi_1| |\Gamma^{\le N} \tilde{u}|\Bigr).
\]
As 
\begin{multline*}(1-\beta_{R_0}(|x|))\Gamma^\mu
  (b^{\alpha\beta}_{IJK} \partial_\alpha \phi^J \partial_\beta
  \tilde{u}^K + \partial_\alpha \tilde{u}^J \partial_\beta \psi_1^K) \\=
  \O\Bigl(\Bigl[|\Gamma^{\le N/2} \partial \phi| + |\Gamma^{\le
    N/2} \partial \psi_1|\Bigr] |\Gamma^{\le N}\partial
  \tilde{u}|
+ |\Gamma^{\le N/2} \partial \tilde{u}|\Bigl[|\Gamma^{\le N} \partial
\phi|+|\Gamma^{\le N} \partial \psi_1|\Bigr] 
\Bigr),
\end{multline*}
no modification is needed for this term.  
However, for the remaining
pieces, we similarly arrange as follows:
\begin{align*}
 (1&-\beta_{R_0}(|x|))\Gamma^\mu(A^{\alpha\beta}_{IJK}\phi^J \partial_\alpha\partial_\beta
u^K) \\&= \Bigl[(1-\beta_{R_0}(|x|))\Gamma^\mu(A^{\alpha\beta}_{IJK}\phi^J \partial_\alpha\partial_\beta
u^K) - (1-\beta_{R_0}(|x|)) A^{\alpha\beta}_{IJK} \phi^J \partial_\alpha \partial_\beta
\Gamma^\mu u^K\Bigr] \\&\qquad\qquad-
(1-\beta_{R_0}(|x|)) A^{\alpha\beta}_{IJK} \partial_\alpha \phi^J \partial_\beta \Gamma^\mu
u^K + \partial_\alpha \Bigl[(1-\beta_{R_0}(|x|))A^{\alpha\beta}_{IJK}
\phi^J \partial_\beta \Gamma^\mu u^K\Bigr]
\\&\qquad\qquad\qquad+ \beta'_{R_0}(|x|)\frac{x_k}{r}
A^{k\beta}_{IJK}\phi^J\partial_\beta\Gamma^\mu u^K\\
&=\O\Bigl(|\Gamma^{\le N/2} \phi||\Gamma^{\le N-1} \partial^2 u| +
  |\Gamma^{\le N/2+1} \partial u||\Gamma^{\le N} \phi|+ |\partial
 \phi| |\Gamma^{\le N} \partial u|\Bigr)
\\&\qquad\qquad\qquad+ \partial_\alpha \Bigl[(1-\beta_{R_0}(|x|))A^{\alpha\beta}_{IJK}
\phi^J \partial_\beta \Gamma^\mu u^K\Bigr]+
\beta'_{R_0}(|x|)\frac{x_k}{r}
A^{k\beta}_{IJK}\phi^J\partial_\beta\Gamma^\mu u^K
\end{align*}
and
\begin{align*}
 (1&-\beta_{R_0}(|x|))\Gamma^\mu(B^{\alpha\beta\gamma}_{IJK}\partial_\gamma \phi^J \partial_\alpha\partial_\beta
u^K) \\&=\Bigl[
           (1-\beta_{R_0}(|x|))\Gamma^\mu(B^{\alpha\beta\gamma}_{IJK}\partial_\gamma
           \phi^J \partial_\alpha\partial_\beta 
u^K) - (1-\beta_{R_0}(|x|)) B^{\alpha\beta\gamma}_{IJK} \partial_\gamma \phi^J \partial_\alpha\partial_\beta
\Gamma^\mu u^K \Bigr]\\&\qquad-(1-\beta_{R_0}(|x|))
B^{\alpha\beta\gamma}_{IJK} \partial_\alpha \partial_\gamma \phi^J \partial_\beta \Gamma^\mu
u^K + \partial_\alpha \Bigl[(1-\beta_{R_0}(|x|)) B^{\alpha\beta\gamma}_{IJK}
\partial_\gamma \phi^J \partial_\beta \Gamma^\mu u^K\Bigr]
\\&\qquad\qquad\qquad+\beta'_{R_0}(|x|)\frac{x_k}{r}B^{k\beta\gamma}_{IJK}\partial_\gamma
\phi^J\partial_\beta \Gamma^\mu u^K
\\&=\O\Bigl(|\Gamma^{\le N/2} \partial \phi||\Gamma^{\le
    N}\partial u| + |\partial^{\le N/2+1} \partial u||\Gamma^{\le
    N} \partial \phi |\Bigr)
\\&\qquad\qquad+ \partial_\alpha \Bigl[(1-\beta_{R_0}(|x|)) B^{\alpha\beta\gamma}_{IJK}
\partial_\gamma \phi^J \partial_\beta \Gamma^\mu u^K\Bigr]
+\beta'_{R_0}(|x|)\frac{x_k}{r}B^{k\beta\gamma}_{IJK}\partial_\gamma
\phi^J\partial_\beta \Gamma^\mu u^K.
\end{align*}
We similarly have these last two rearrangements when $(\phi, u)$ is
replaced by $(\tilde{u}, \psi_2)$.

We write $(1-\beta_{R_0}(|x|))\Gamma^\mu u^I = \tilde{w} + w + v$
where each has vanishing Cauchy data and
\begin{multline*}\Box_{c_I} \tilde{w} = c_I^2 [\Delta,\beta_{R_0}(|x|)] \Gamma^\mu
  u^I +\beta_{R_0}'(|x|) \frac{x_k}{r}a^k_{IJK} \phi^J
\Gamma^\mu \tilde{u}^K \\+
\beta'_{R_0}(|x|)\frac{x_k}{r}
A^{k\beta}_{IJK}\phi^J \partial_\beta\Gamma^\mu u^K
+\beta'_{R_0}(|x|)\frac{x_k}{r}B^{k\beta\gamma}_{IJK}\partial_\gamma 
\phi^J\partial_\beta \Gamma^\mu u^K 
\\+
\beta'_{R_0}(|x|)\frac{x_k}{r}
A^{k\beta}_{IJK}\tilde{u}^J \partial_\beta\Gamma^\mu \psi_2^K
+\beta'_{R_0}(|x|)\frac{x_k}{r}B^{k\beta\gamma}_{IJK}\partial_\gamma 
\tilde{u}^J\partial_\beta \Gamma^\mu \psi_2^K 
\end{multline*}
and
\begin{multline*}\Box_{c_I} v = \partial_\alpha \Bigl[(1-\beta_{R_0}(|x|))a^\alpha_{IJK}
  \phi^J \Gamma^\mu \tilde{u}^K\Bigr] +\partial_\alpha
  \Bigl[(1-\beta_{R_0}(|x|))A^{\alpha\beta}_{IJK} 
\phi^J \partial_\beta \Gamma^\mu u^K\Bigr] \\+ 
\partial_\alpha \Bigl[(1-\beta_{R_0}(|x|))B^{\alpha\beta\gamma}_{IJK}
\partial_\gamma \phi^J \partial_\beta \Gamma^\mu u^K\Bigr]
+\partial_\alpha
  \Bigl[(1-\beta_{R_0}(|x|))A^{\alpha\beta}_{IJK} 
\tilde{u}^J \partial_\beta \Gamma^\mu \psi_2^K\Bigr] \\+ 
\partial_\alpha \Bigl[(1-\beta_{R_0}(|x|))B^{\alpha\beta\gamma}_{IJK}
\partial_\gamma \tilde{u}^J \partial_\beta \Gamma^\mu \psi_2^K\Bigr].
\end{multline*}

We shall apply \eqref{cpctF} to $\tilde{w}$, \eqref{divFormLE} to $v$, and
\eqref{lenod}, \eqref{weightedStrichartz} to $w$.  Upon doing so, it
follows that the left side of \eqref{MNstep2} is controlled by

\begin{multline}
  \label{lotStep1}
\| [\Delta,\beta_{R_0}(|x|)] \Gamma^{\le N}
  u\|_{L^2L^2} +
\|\beta'_{R_0} \phi\:
\Gamma^{\le N} \tilde{u}\|_{L^2L^2} \\+ \|\beta'_{R_0}
\partial^{\le 1} \phi \:\Gamma^{\le N} \partial
u\|_{L^2L^2} 
+ \|\beta'_{R_0}
\partial^{\le 1} \tilde{u} \:\Gamma^{\le N} \partial
\psi_2\|_{L^2L^2} 
\\+ \int_0^t \| \phi\:
\Gamma^{\le N} \tilde{u} \|_{L^2(\ext)}\,ds + \int_0^t \|\partial^{\le
  1} \phi\: \Gamma^{\le N} \partial u\|_{L^2(\ext)}\,ds 
+\int_0^t \|\partial^{\le
  1} \tilde{u}\: \Gamma^{\le N} \partial \psi_2\|_{L^2(\ext)}\,ds
\\
+
\la t\ra^{-\delta} \|r^{-1} \Gamma^{\le N/2} \phi \:\Gamma^{\le
  N-1} \partial \tilde{u} \|_{L^1L^1L^2} 
+
\|r^{-\frac{5}{4}} \Gamma^{\le N/2} \phi \:\Gamma^{\le
  N-1} \partial \tilde{u} \|_{L^1L^1L^2}
\\+
\la t\ra^{-\delta} \|r^{-1} \Gamma^{\le N/2} \partial \phi \:\Gamma^{\le
  N} \partial^{\le 1} \tilde{u} \|_{L^1L^1L^2} 
+
\|r^{-\frac{5}{4}} \Gamma^{\le N/2} \partial \phi \:\Gamma^{\le
  N} \partial^{\le 1} \tilde{u} \|_{L^1L^1L^2}
\\+ \la t\ra^{-\delta} \|r^{-1} \Gamma^{\le
  N/2} \partial \tilde{u}\:\Gamma^{\le N} \partial^{\le 1}\phi \|_{L^1L^1L^2} 
+
\|r^{-\frac{5}{4}} \Gamma^{\le
  N/2} \partial \tilde{u} \:\Gamma^{\le N} \partial^{\le 1}\phi \|_{L^1L^1L^2}
\\
+\sum_{j=1,2}\Bigl(
\la t\ra^{-\delta} \|r^{-1} \Gamma^{\le N/2} \partial^{\le 1}\tilde{u} \:\Gamma^{\le
  N} \partial \psi_j\|_{L^1L^1L^2} 
+
\|r^{-\frac{5}{4}} \Gamma^{\le N/2} \partial^{\le 1}\tilde{u} \:\Gamma^{\le
  N} \partial \psi_j \|_{L^1L^1L^2}\Bigr)
\\+ \sum_{j=1,2}\Bigl(\la t\ra^{-\delta} \|r^{-1} \Gamma^{\le
  N/2+1} \partial \psi_j \:\Gamma^{\le N} \partial^{\le 1}\tilde{u}\|_{L^1L^1L^2} 
+
\|r^{-\frac{5}{4}} \Gamma^{\le
  N/2+1} \partial \psi_j \:\Gamma^{\le N} \partial^{\le 1}\tilde{u}
\|_{L^1L^1L^2}\Bigr)
\\
+
\la t\ra^{-\delta} \|r^{-1} \Gamma^{\le N/2} \phi \:\Gamma^{\le
  N-1} \partial^2 u \|_{L^1L^1L^2} 
+
\|r^{-\frac{5}{4}} \Gamma^{\le N/2} \phi \:\Gamma^{\le
  N-1} \partial^2 u \|_{L^1L^1L^2}
\\+
\la t\ra^{-\delta} \|r^{-1} \Gamma^{\le N/2} \partial \phi \:\Gamma^{\le
  N} \partial u \|_{L^1L^1L^2} 
+
\|r^{-\frac{5}{4}} \Gamma^{\le N/2} \partial \phi \:\Gamma^{\le
  N} \partial u \|_{L^1L^1L^2}
\\+ \la t\ra^{-\delta} \|r^{-1} \Gamma^{\le
  N/2+1} \partial u\:\Gamma^{\le N} \partial^{\le 1}\phi \|_{L^1L^1L^2} 
+
\|r^{-\frac{5}{4}} \Gamma^{\le
  N/2+1} \partial u \:\Gamma^{\le N} \partial^{\le 1} \phi \|_{L^1L^1L^2}.
\end{multline}

By \eqref{MNstep1}, the first term is controlled by the right side of
\eqref{MNstep2}.   By a Sobolev embedding, the second, third, and fourth term are
controlled by
\begin{multline*}\|\beta_{2R_0} \partial^{\le 3}\partial 
  \phi\|_{L^\infty L^2} \Bigl(\|\la
x\ra^{-3/4} \Gamma^{\le N} \tilde{u} \|_{L^2L^2} + \|\la
x\ra^{-3/4} \Gamma^{\le N} \partial u\|_{L^2L^2}\Bigr)
\\+ \|\beta_{2R_0} \partial^{\le 3}\partial \tilde{u}\|_{L^\infty L^2}
\|\la x\ra^{-3/4} \Gamma^{\le N} \partial\psi_2\|_{L^2L^2}
\\\lesssim M_3[\phi]\Bigl(M_N[\tilde{u}] +M_N[u]\Bigr) + M_3[\tilde{u}]M_N[\psi_2].
\end{multline*}
The fifth and sixth terms are bounded by applying \eqref{weighted-Sob} to the lower order
factor and applying the Cauchy-Schwarz inequality.  This gives that
these terms are controlled by
\begin{multline*}\|\la x\ra^{-3/4} \Gamma^{\le 3} \partial^{\le 1} \phi\|_{L^2L^2} \Bigl(\|\la
x\ra^{-3/4} \Gamma^{\le N} \tilde{u}\|_{L^2L^2} + \|\la x\ra^{-3/4}
\Gamma^{\le N} \partial u\|_{L^2L^2}\Bigr) \\\lesssim
M_3[\phi]\Bigl(M_N[\tilde{u}]+M_N[u]\Bigr).
\end{multline*}
The same argument shows that the seventh term in \eqref{lotStep1} is
$\O(M_3[\tilde{u}] M_N[\psi_2])$.

To control the remaining terms, except those that involve
$\partial^2 u$, we shall show
\begin{multline}
  \label{abstractGoal}
  \la t\ra^{-\delta} \|r^{-1} \partial^{\le 1} \Phi \: \partial \Psi \|_{L^1L^1L^2} 
+
\|r^{-\frac{5}{4}} \partial^{\le 1} \Phi \: \partial \Psi\|_{L^1L^1L^2} \lesssim
M_2[\Phi](t)M_1[\Psi](t) \\+ M_2[\Phi](t)\|\la x\ra^{3/4} \Box \Psi\|_{L^2L^2}
\end{multline}
and
\begin{multline}
  \label{abstractGoal2}
  \la t\ra^{-\delta} \|r^{-1} \partial^{\le 1} \Phi
  \: \partial \Psi \|_{L^1L^1L^2} 
+
\|r^{-\frac{5}{4}} \partial^{\le 1} \Phi \: \partial \Psi\|_{L^1L^1L^2} \lesssim
M_0[\Phi](t)M_3[\Psi](t) \\+ M_0[\Phi](t)\|\la x\ra^{3/4} \Gamma^{\le 2} \Box \Psi\|_{L^2L^2}.
\end{multline}

\begin{proof}[Proof of \eqref{abstractGoal} and \eqref{abstractGoal2}]
We divide the analysis
into the region where $|x|<c_1 s/8$ and $|x|\ge c_1s/8$.  Applying Sobolev embeddings and the Schwarz
inequality on $\S^3$, these terms are bounded by 
\begin{multline}\label{dividedAnalysis}
\la t\ra^{-\delta} \|r^{-1} \partial^{\le 1}\Phi \: \partial \Psi \|_{L^1L^1L^2} 
+
\|r^{-\frac{5}{4}} \partial^{\le 1} \Phi \: \partial \Psi \|_{L^1L^1L^2}
\\\lesssim
  \sum_{j<\log t} \sum_{k<j-\tilde{C}} 2^{-k} \|\Gamma^{\le 2} \partial^{\le 1}
  \Phi\|_{L^2_{2^j}
    L^2_{2^k}} \|\partial \Psi\|_{L^2_{2^j}L^2_{2^k}}
\\+ \la t\ra^{-\delta} \int_0^t \int_{r\ge c_1s/4} \la r\ra^{-1} \|\Gamma^{\le
  2} \partial^{\le 1} \Phi(t,r\cdot)\|_{L^2(\S^3)} \|\partial \Psi(t,r\cdot)\|_{L^2(\S^3)} r^2 dr ds
\\+ \int_0^t \int_{r\ge c_1s/4} \la r\ra^{-5/4} \|\Gamma^{\le
  2} \partial^{\le 1} \Phi(t,r\cdot)\|_{L^2(\S^3)} \|\partial \Psi(t,r\cdot)\|_{L^2(\S^3)} r^2 dr ds
\end{multline}
for some fixed $\tilde{C}>0$.  Here we have applied the Sobolev
embedding in the fashion that will yield \eqref{abstractGoal}.  To prove
\eqref{abstractGoal2}, we instead apply the Sobolev estimate to $\Psi$
and proceed with the same steps.

Applying the Schwarz inequality, the second and third terms in the
right side of \eqref{dividedAnalysis} are controlled by
\[  \Bigl(\la t\ra^{-\delta} \int_0^t \la s\ra^{-1+\delta} \,ds +
  \int_0^t \la s\ra^{-5/4+\delta}\,ds\Bigr)
M_2[\Phi](t)M_0[\Psi](t)  
\lesssim M_2[\Phi](t) M_0[\Psi](t).
\]

To bound the first term in the right side of \eqref{dividedAnalysis},
we note that \eqref{mtt_lem} (with $h\equiv 0$) gives
\[\|\partial \Psi\|_{L^2_{2^j}L^2_{2^k}} \lesssim 2^{-k} \|\Gamma^{\le 1}
  \Psi\|_{L^2_{2^j}L^2_{2^k}} + 2^k \|\Box \Psi\|_{L^2_{2^j}L^2_{2^k}}\]
when $k<j-\tilde{C}$.  Thus,
\begin{align*}
    \sum_{j<\log t} &\sum_{k<j-\tilde{C}} 2^{-k} \|\Gamma^{\le 2} \partial^{\le
             1}\Phi\|_{L^2_{2^j}
    L^2_{2^k}} \|\partial \Psi\|_{L^2_{2^j}L^2_{2^k}}
\\&\lesssim \sum_j \sum_{k<j-\tilde{C}} \Bigl(2^{-2k} \|\Gamma^{\le
    2} \partial^{\le 1} \Phi\|_{L^2_{2^j}
    L^2_{2^k}} \|\Gamma^{\le 1} \Psi\|_{L^2_{2^j}L^2_{2^k}} +
  \|\Gamma^{\le 2} \partial^{\le 1} \Phi\|_{L^2_{2^j}L^2_{2^k}} \|\Box
    \Psi\|_{L^2_{2^j}L^2_{2^k}}\Bigr)
\\&\lesssim M_2[\Phi](t)M_1[\Psi](t)+M_2[\Phi](t) \|\la x\ra^{3/4} \Box \Psi\|_{L^2L^2},
\end{align*}
where we have applied the Cauchy-Schwarz inequality to sum over $k, j$.
\end{proof}

Examining the remaining terms of \eqref{lotStep1}, we see that
\eqref{abstractGoal} gives us
\begin{multline*}
\la t\ra^{-\delta} \|r^{-1} \Gamma^{\le N/2} \phi \:\Gamma^{\le
  N-1} \partial \tilde{u} \|_{L^1L^1L^2} 
+
\|r^{-\frac{5}{4}} \Gamma^{\le N/2} \phi \:\Gamma^{\le
  N-1} \partial \tilde{u} \|_{L^1L^1L^2}\\\lesssim M_{N/2+2}[\phi]
M_N[\tilde{u}]
+M_{N/2+2}[\phi] \|\la x\ra^{3/4} \Gamma^{\le N-1} \Box \tilde{u}\|_{L^2L^2}
\end{multline*}
and
\begin{multline*}
  \la t\ra^{-\delta} \|r^{-1} \Gamma^{\le N/2} \partial^{\le 1}\tilde{u} \:\Gamma^{\le
  N} \partial \psi_j\|_{L^1L^1L^2} 
+
\|r^{-\frac{5}{4}} \Gamma^{\le N/2} \partial^{\le 1}\tilde{u} \:\Gamma^{\le
  N} \partial \psi_j \|_{L^1L^1L^2}
\\\lesssim M_{N/2+2}[\tilde{u}] M_{N+1}[\psi_j] + M_{N/2+2}[\tilde{u}]
\|\la x\ra^{3/4} \Gamma^{\le N}\Box\psi_j\|_{L^2L^2}.
\end{multline*}
And from \eqref{abstractGoal2}, we obtain
\begin{multline*}
  \la t\ra^{-\delta} \|r^{-1} \Gamma^{\le N/2} \partial \phi \:\Gamma^{\le
  N} \partial^{\le 1} \tilde{u} \|_{L^1L^1L^2} 
+
\|r^{-\frac{5}{4}} \Gamma^{\le N/2} \partial \phi \:\Gamma^{\le
  N} \partial^{\le 1} \tilde{u} \|_{L^1L^1L^2} 
\\\lesssim M_N[\tilde{u}] M_{N/2+3}[\phi] + M_N[\tilde{u}] \|\la
  x\ra^{3/4}\Gamma^{\le N/2+2} \Box \phi\|_{L^2L^2},
\end{multline*}
\begin{multline*}
  \la t\ra^{-\delta} \|r^{-1} \Gamma^{\le
  N/2} \partial \tilde{u}\:\Gamma^{\le N} \partial^{\le 1}\phi \|_{L^1L^1L^2} 
+
\|r^{-\frac{5}{4}} \Gamma^{\le
  N/2} \partial \tilde{u} \:\Gamma^{\le N} \partial^{\le 1}\phi
\|_{L^1L^1L^2}
\\\lesssim M_N[\phi] M_{N/2+3}[\tilde{u}] + M_N[\phi] \|\la
x\ra^{3/4}\Gamma^{\le N/2+2}\Box \tilde{u}\|_{L^2L^2},
\end{multline*}
\begin{multline*}
  \la t\ra^{-\delta} \|r^{-1} \Gamma^{\le
  N/2+1} \partial \psi_j \:\Gamma^{\le N} \partial^{\le 1}\tilde{u}\|_{L^1L^1L^2} 
+
\|r^{-\frac{5}{4}} \Gamma^{\le
  N/2+1} \partial \psi_j \:\Gamma^{\le N} \partial^{\le 1}\tilde{u}
\|_{L^1L^1L^2}
\\\lesssim M_N[\tilde{u}] M_{N/2+4}[\psi_j] + M_N[\tilde{u}] \|\la
x\ra^{3/4} \Gamma^{\le N/2+3}\Box\psi_j\|_{L^2L^2},
\end{multline*}
\begin{multline*}
  \la t\ra^{-\delta} \|r^{-1} \Gamma^{\le N/2} \partial \phi \:\Gamma^{\le
  N} \partial u \|_{L^1L^1L^2} 
+
\|r^{-\frac{5}{4}} \Gamma^{\le N/2} \partial \phi \:\Gamma^{\le
  N} \partial u \|_{L^1L^1L^2}
\\\lesssim M_N[u] M_{N/2+3}[\phi] + M_N[u] \|\la x\ra^{3/4}
\Gamma^{\le N/2+2}\Box \phi\|_{L^2L^2},
\end{multline*}
and
\begin{multline*}
  \la t\ra^{-\delta} \|r^{-1} \Gamma^{\le
  N/2+1} \partial u\:\Gamma^{\le N} \partial^{\le 1}\phi \|_{L^1L^1L^2} 
+
\|r^{-\frac{5}{4}} \Gamma^{\le
  N/2+1} \partial u \:\Gamma^{\le N} \partial^{\le 1} \phi
\|_{L^1L^1L^2}
\\\lesssim M_N[\phi] M_{N/2+4}[u] + M_N[\phi]\|\la x\ra^{3/4}
\Gamma^{\le N/2+3} \Box u\|_{L^2L^2}.
\end{multline*}

In order to control the remaining term
\begin{equation}
  \label{remaining}
\la t\ra^{-\delta} \|r^{-1} \Gamma^{\le N/2} \phi \:\Gamma^{\le
  N-1} \partial^2 u \|_{L^1L^1L^2} 
+
\|r^{-\frac{5}{4}} \Gamma^{\le N/2} \phi \:\Gamma^{\le
  N-1} \partial^2 u \|_{L^1L^1L^2},
\end{equation}
we shall argue as in
\eqref{abstractGoal}, but it will be necessary to use a perturbation
of $\Box$ in \eqref{mtt_lem} so as to not exceed the allowable regularity.
It is likely that estimates such
as \cite[Lemma 3.1]{KlSid} could be used as an alternative to
\eqref{mtt_lem} for these cases, but we do not explore such here.

We again consider $|x|<c_1 s/8$ and $|x|\ge c_1s/8$ separately.  
Sobolev embeddings as above give that \eqref{remaining} is
\begin{multline}\label{dividedAnalysis2}
\lesssim
  \sum_{j<\log t} \sum_{k<j-\tilde{C}} 2^{-k} \|\Gamma^{\le N/2+2} \phi\|_{L^2_{2^j}
    L^2_{2^k}} \|\Gamma^{N-1} \partial^2 u\|_{L^2_{2^j}L^2_{2^k}}
\\+ \la t\ra^{-\delta} \int_0^t \int_{r\ge c_1s/4} \la r\ra^{-1}
\|\Gamma^{\le N/2+
  2} \phi(t,r\cdot)\|_{L^2(\S^3)} \|\Gamma^{\le N} \partial u(t,r\cdot)\|_{L^2(\S^3)} r^2 dr ds
\\+ \int_0^t \int_{r\ge c_1s/4} \la r\ra^{-5/4} \|\Gamma^{\le N/2+
  2} \phi(t,r\cdot)\|_{L^2(\S^3)} \|\Gamma^{\le N} \partial u(t,r\cdot)\|_{L^2(\S^3)} r^2 dr ds
\end{multline}
with $\tilde{C}>0$ as above.  Using the Schwarz inequality and arguing
as in the proof of \eqref{abstractGoal}, the last
two terms are $\O(M_{N/2+2}[\phi](t) M_N[u](t))$.

It remains to bound the first term of \eqref{dividedAnalysis2}.  Here
we again set
$h^{IK,\alpha\beta}=-A^{\alpha\beta}_{IJK}\phi^J-B^{\gamma\alpha\beta}_{IJK}\partial_\gamma \phi^J$.
Using \eqref{boxhvf} and \eqref{weighted-Sob}, we have
\begin{multline*}
  2^k \|\Box_h \Gamma^{\le N} u\|_{L^2_{2^j}L^2_{2^k}} \lesssim
  2^{-k/2} \Bigl(\|\Gamma^{\le N/2+4} \phi\|_{L^2_{2^j}L^2_{2^k}}
  \|\Gamma^{\le N} \partial\tilde{u}\|_{L^\infty L^2}
\\+ \|\Gamma^{\le N/2+3}\partial\tilde{u}\|_{L^\infty L^2} \|\Gamma^{\le
  N}\partial^{\le 1}\phi\|_{L^2_{2^j}L^2_{2^k}}
+ \|\Gamma^{\le N/2+3} \partial\psi_1\|_{L^\infty L^2} \|\Gamma^{\le
  N} \partial^{\le 1} \tilde{u}\|_{L^2_{2^j}L^2_{2^k}} \\+ \|\Gamma^{\le
  N/2+3}\partial^{\le 1}\tilde{u}\|_{L^2_{2^j}L^2_{2^k}} \|\Gamma^{\le
  N}\partial \psi_1\|_{L^\infty L^2} +
\|\Gamma^{N/2+4}\phi\|_{L^2_{2^j}L^2_{2^k}} \|\Gamma^{\le N}\partial
u\|_{L^\infty L^2} 
\\+\|\Gamma^{N/2+4}\partial u\|_{L^\infty L^2} \|\Gamma^{\le
  N}\partial^{\le 1} \phi\|_{L^2_{2^j}L^2_{2^k}}
+ \|\Gamma^{\le N/2+3}\partial^{\le 1}\tilde{u}\|_{L^2_{2^j}L^2_{2^k}}
\|\Gamma^{\le N+1}\partial\psi_2\|_{L^\infty L^2} 
\\+ \|\Gamma^{\le N/2+4}\partial\psi_2\|_{L^\infty L^2} \|\Gamma^{\le
  N} \partial^{\le 1} \tilde{u}\|_{L^2_{2^j}L^2_{2^k}}\Bigr).
\end{multline*}
By \eqref{mtt_lem}, for $k<j-\tilde{C}$, we have
\[\|\Gamma^{\le N-1} \partial^2 u\|_{L^2_{2^j}L^2_{2^k}} \lesssim 2^{-k} \|\Gamma^{\le N}
 \partial  u\|_{L^2_{2^j}L^2_{2^k}} + 2^k \|\Box_h \Gamma^{\le N} u\|_{L^2_{2^j}L^2_{2^k}}.\]
Using the previous two estimates, it thus follows that
\begin{multline*}
    \sum_{j<\log t} \sum_{k<j-\tilde{C}} 2^{-k} \|\Gamma^{\le N/2+ 2} \phi\|_{L^2_{2^j}
    L^2_{2^k}} \|\Gamma^{\le N-1}\partial^2 u\|_{L^2_{2^j}L^2_{2^k}}
\lesssim M_{N/2+2}[\phi] M_N[u] \\+
    M_{N/2+2}[\phi]\Bigl( \Bigl[M_{N/2+4}[\phi] + M_{N/2+3}[\psi_1] + M_{N/2+4}[\psi_2]\Bigr]M_N[\tilde{u}]
    \\+M_{N/2+3}[\tilde{u}] \Bigl[M_N[\phi] + M_N[\psi_1] + M_{N+1}[\psi_2]\Bigr] \\
    + M_{N/2+4}[\phi] M_N[u]  +
    M_{N/2+4}[u] M_N[\phi]\Bigr)
\end{multline*}
by splitting the decay equally amongst the $L^2_{2^j} L^2_{2^k}$
portions and using the Cauchy-Schwarz inequality to sum.  This
completes the proof.\qed

\bibliography{exterior}

\end{document}